 \tikzset{help lines/.style={step=#1cm,very thin, color=gray},
help lines/.default=.5} 
\tikzset{thick grid/.style={step=#1cm,thick, color=gray},
thick grid/.default=1} 
\newtheorem{thm}{Theorem}[section]
\newtheorem*{thm*}{Theorem}
\newtheorem{lem}[thm]{Lemma}
\newtheorem{cor}[thm]{Corollary}
\newtheorem{prop}[thm]{Proposition}
\theoremstyle{definition}
\newtheorem{defn}[thm]{Definition}
\newtheorem{eg}[thm]{Example}
\theoremstyle{remark}
\newtheorem{rem}[thm]{Remark}
\numberwithin{equation}{section}
\newcommand{\field}[1]{\mathbb{#1}}
\newcommand{\CC}{\ensuremath{{\field{C}}}}
\newcommand{\commentout}[1]{}
\title{Frieze varieties are invariant under  Coxeter mutation}
\author{Kiyoshi Igusa}
\address{Department of Mathematics, Brandeis University, Waltham, MA 02454}\email{igusa@brandeis.edu}
\author{Ralf Schiffler}\thanks{The second author was supported by the NSF grants  DMS-1254567 and  DMS-1800860 and by the University of Connecticut.}
\address{Department of Mathematics, University of Connecticut, USA}
\email{schiffler@math.uconn.edu}
\subjclass[2010]{13F60,16G20}
\begin{document}

\maketitle

\begin{abstract}
We define a generalized version of the frieze variety introduced by  {Lee, Li, Mills, Seceleanu and} the second author. The generalized frieze variety is an algebraic variety determined by an acyclic quiver and a generic specialization of cluster variables in the cluster algebra for this quiver. {The original frieze variety is obtained when this specialization is $(1,\ldots,1)$}.

The main result is that a generalized frieze variety is determined by any generic element of any component of that variety. We also show that the ``Coxeter mutation'' cyclically permutes these components. In particular, this shows that the frieze variety is invariant under the Coxeter mutation at a generic point.

The paper contains many examples which are generated using a new technique which we call an invariant Laurent polynomial. We show that a symmetry of a mutation of a quiver gives such an invariant rational function.
\end{abstract}

\tableofcontents

\section*{Introduction}

At the {ARTA VI Conference in Mexico} celebrating Jose Antonio de la Pe\~na's 60th birthday, the second author presented his paper \cite{LLMSS} defining ``frieze varieties'' of an acyclic quiver and, using a result of Jose Antonio de la Pe\~na \cite{dlP} on eigenvalues of the Coxeter matrix of the quiver, to prove the main result: The dimension of this frieze variety is equal to {0,1, or $\ge 2$  if and only if the representation type of the quiver is finite, tame, or wild, respectively. In particular, the dimension of the frieze variety is 1 if and only if $Q$ is an extended Dynkin quiver.}

After {the} talk, the two authors discussed properties and examples of frieze varieties throughout the conference. This paper is a report on these discussions.

The basic idea was to generalize the notion of frieze varieties by allowing for arbitrary generic vectors $(b_1, b_2,\ldots,b_n)$ instead of the vector $(1,1, \ldots,1)$ as the initial condition for the defining recurrence. We show that, for any generic point $a_\ast=(a_1,\ldots,a_n)$ in any component of the (generalized) frieze variety, the Coxeter mutations of $a_\ast$ \eqref{Coxeter mutation} will be contained as a dense subset of the variety. (Theorems \ref{thm: generic points of X(Q) generate X(Q)}, \ref{thm: generic points of X(Q,b) generate X(Q,b)})

We also show that the Coxeter mutation cyclically permutes the components of the  (generalized) frieze variety (Theorem \ref{thm: mu ast cyclically permutes components of X}). In nice examples, we can use this cyclic permutation to generate all polynomials which define all components of the generalized frieze variety out of a single rational function. (Proposition \ref{prop: formulas from one h})

 Finally, we also observe that a symmetry of a mutation of a quiver decreases the dimension of the frieze variety. (Proposition \ref{prop: symmetry of a mutation of Q})

The authors wish to thank the organizers of the ARTA VI conference  for a very enjoyable and productive event. We also wish to say a special ``Happy Birthday'' to Jos\'e Antonio de la Pe\~na and congratulations on his numerous achievements. This paper was also presented by the first author and referenced by the second author at the conference ``Cluster Algebras and Representation Theory'' held in Kyoto in June 2019. Key observations by Gordana Todorov before and during that conference are added in Section \ref{sec: Q+A} with details to be given in another paper. Also, Salvatore Stella and Alastair King gave us very helpful comments about the first version of this paper.


\section{Preliminaries}\label{sect 1}
{We recall the main result of \cite{LLMSS}.  Let $Q$ be a connected finite quiver without oriented cycles and label the vertices $1,2,\ldots,n$ such that if there is an arrow $i\to j$ then $i>j$.

\begin{defn}\cite{LLMSS}\label{def 1}
\begin{enumerate}
\item For every vertex $i\in Q_0$ define positive rational numbers $f_i(t)$ ($t\in\mathbb{Z}_{\ge0}$) recursively by  $f_i(0)=1$ and 
\begin{equation}\nonumber
f_i(t+1)=\frac{1+\prod_{j\to i}f_j(t) \prod_{j\leftarrow i}f_j(t+1)}{f_i(t)}.
\end{equation}
\item 
For every $t\ge 0$, define the point  $P_t=(f_1(t),\dots,f_n(t))\in\mathbb{C}^n$.
\item 
The \emph{frieze variety} $X(Q)$ of the quiver $Q$ is the Zariski closure of the set of all  points $P_t$ ($t\in\mathbb{Z}_{\ge0}$). 
\end{enumerate}
\end{defn}

The main result in \cite{LLMSS} is the following characterization of the finite--tame--wild trichotomy for acyclic quivers $Q$ in terms of its frieze variety $X(Q)$.


\begin{thm}\cite{LLMSS}\label{thm main}
Let $Q$ be an acyclic quiver.
\begin{itemize}
\item [{\rm (a)}] If $Q$ is representation finite then  the frieze variety $X(Q)$ is of dimension 0.
\item [{\rm (b)}] If $Q$ is tame then the frieze variety $X(Q)$ is of dimension 1.
\item [{\rm (c)}] If $Q$ is wild then the frieze variety $X(Q)$ is of dimension at least 2.
\end{itemize}
\end{thm}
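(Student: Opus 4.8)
The plan is to pass from the recurrence of Definition~\ref{def 1} to the cluster algebra $\mathcal{A}(Q)$ and then to read off $\dim X(Q)$ from the spectrum of the Coxeter transformation $\Phi_Q$ via de la Pe\~na's trichotomy \cite{dlP}. Writing $\mu=\mu_n\cdots\mu_1$ for the Coxeter mutation (mutation at the vertices in the given order, which returns $Q$), one checks that each $f_i(t)$ is the value at $(1,\dots,1)$ of a cluster variable $x_i(t)$ of $\mathcal{A}(Q)$ and that $P_{t+1}=\mu(P_t)$; thus $\mu$ is a birational self-map of $\mathbb{C}^n$ and $X(Q)=\overline{\{\mu^t(P_0):t\ge0\}}$ is the Zariski closure of a single forward orbit. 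The quantitative core is the growth of the $f_i(t)$: positivity of the $F$-polynomials gives $f_i(t)\ge 1$, and taking logarithms in the recurrence, with $v(t)=(\log f_1(t),\dots,\log f_n(t))$, shows that the term ``$+1$'' contributes only a bounded error, so $v(t+1)=\Phi_Q v(t)+\varepsilon(t)$ with $0<\varepsilon_i(t)\le\log 2$. Hence — once one checks the orbit is not trapped in a $\Phi_Q$-invariant subspace of smaller spectral radius — $\|v(t)\|$ is comparable to $\rho(\Phi_Q)^t$, up to the polynomial factor coming from the Jordan form of $\Phi_Q$.

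For (a): a Dynkin $Q$ gives a finite type cluster algebra, so there are finitely many $x_i(t)$, hence finitely many $P_t$ and $\dim X(Q)=0$. For the lower bounds in (b) and (c): when $Q$ is not Dynkin the denominator vectors of the $x_i(t)$ are the dimension vectors of the infinitely many indecomposable preprojectives, so they are pairwise distinct, and a positivity estimate forces $f_i(t)\to\infty$; thus $\{P_t\}$ is infinite and $\dim X(Q)\ge1$. If moreover $Q$ is wild then $\rho(\Phi_Q)>1$, so by the first paragraph the heights satisfy $\log f_i(t)\asymp\rho(\Phi_Q)^t$, i.e.\ grow exponentially. Suppose $\dim X(Q)=1$. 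Replacing $\mu$ by a power, it fixes an irreducible component $X'$ that is a curve and contains infinitely many orbit points, and it induces an automorphism of the smooth projective model $\widetilde{X'}$; but along an orbit of an automorphism of a smooth projective curve the heights grow at most polynomially (the orbit is finite if the genus is $\ge2$, and the height grows linearly on $\mathbb{P}^1$ or quadratically on an elliptic curve). This contradicts the exponential growth, so $\dim X(Q)\ge2$, which is (c).

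The remaining statement, the upper bound $\dim X(Q)\le1$ when $Q$ is tame, is the main obstacle. Here de la Pe\~na's result gives $\rho(\Phi_Q)=1$ with a single nontrivial ($2\times2$) Jordan block, so $\mu$ has zero algebraic entropy and $\deg\mu^t$ grows only polynomially; the task is to upgrade this to $n-1$ algebraically independent $\mu$-invariant rational functions whose common level set through the orbit is a curve. For $n=2$ (the Kronecker quiver $\widetilde{A}_1$) this can be seen by hand: $h(x,y)=(x^2+y^2+1)/(xy)$ is $\mu$-invariant and $X(Q)$ is the conic $\{h=3\}$. In general the systematic construction of these invariants is precisely the ``invariant Laurent polynomial'' mechanism of this paper, and it can be carried out either through the explicit combinatorics of the extended Dynkin types $\widetilde{A},\widetilde{D},\widetilde{E}$, or conceptually via the structure of zero-entropy birational maps preserving a fibration by curves — and it is exactly here that de la Pe\~na's sharp control of the spectrum of $\Phi_Q$ is indispensable.
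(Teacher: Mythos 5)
First, a point of orientation: the present paper does not prove Theorem~\ref{thm main} at all --- it is quoted from \cite{LLMSS}, and the introduction only records that the proof there rests on de la Pe\~na's analysis \cite{dlP} of the eigenvalues of the Coxeter matrix. So your proposal must be judged against that strategy rather than against an in-document argument. Your skeleton matches it: $P_{t+1}=\mu_\ast(P_t)$, the log-coordinates satisfy $v(t+1)=\Phi_Q v(t)+E(t)$ with $E(t)$ bounded and positive (though note the bound is a $Q$-dependent constant, not $\log 2$, once you unfold the recursion through the vertex order $1,\dots,n$), and the trichotomy is read off the spectrum of $\Phi_Q$. Part (a) is fine, and the lower bound $\dim\ge 1$ for non-Dynkin $Q$ is fine. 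Your wild-case argument is a genuinely different route from the growth-rate comparison in \cite{LLMSS}: you convert exponential growth of $\log f_i(t)$ into exponential growth of Weil heights and contradict the polynomial height growth along orbits of automorphisms of smooth projective curves (finite orbit for genus $\ge 2$, quadratic via the canonical height for genus $1$, linear for genus $0$). This can be made rigorous, but it silently uses two things you should supply: (i) that $\mu_\ast^m$ restricts to a \emph{birational} self-map of the relevant component (this is exactly what Lemma~\ref{lem1: mu sends X to X} and Theorems~\ref{thm: mu ast cyclically permutes components of X}, \ref{cor: dimension of components} of the present paper establish, so it is not free in a blind proof), and (ii) that the Perron eigenvector of $\Phi_Q$ is positive and pairs nontrivially with $E(0)$, so that $\log f_i(t)\gtrsim\rho^t$ and not merely $\lesssim\rho^t$; this positivity is precisely what de la Pe\~na's theorem provides in the wild case.

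The genuine gap is the upper bound in (b): you never prove $\dim X(Q)\le 1$ for tame $Q$, and you say so yourself. Knowing that $\rho(\Phi_Q)=1$ with a single nontrivial Jordan block, hence zero algebraic entropy and polynomial degree growth of $\mu_\ast^t$, does \emph{not} by itself yield $n-1$ algebraically independent invariant functions or any defining equations; a zero-entropy birational map need not preserve a fibration by curves without further input, and in any case "can be carried out either through the explicit combinatorics of $\widetilde A,\widetilde D,\widetilde E$ or conceptually" is a placeholder, not an argument. This is the part of the theorem where the real work lies: one must actually exhibit enough equations vanishing on the orbit (in \cite{LLMSS} this is done directly; in the present paper the invariant-Laurent-polynomial mechanism of Proposition~\ref{prop: formulas from one h} and Section~\ref{sec: Q+A}(1), via cluster characters of regular modules in tubes, is what produces them). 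As written, part (b) is only half proved, so the proposal is incomplete.
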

}


\section{Definitions and main result}
Let $Q$ be as in section \ref{sect 1}. 
Let $\mathcal{A}(Q)$ be the cluster algebra of $Q$ and let $\mathbf{x}=(x_1,\ldots,x_n)$ be the initial cluster in $\mathcal{A}(Q)$. Let $\mu_k$ denote the mutation in direction $k$ and let $x_k'$ be the new cluster variable obtained by this mutation, thus $\mu_k(\mathbf{x})=(x_1,\ldots,x_{k-1},x_k',x_{k+1},\ldots,x_n)$. We define the \emph{Coxeter mutation} to be the mutation sequence 
\begin{equation}\label{Coxeter mutation}
\mu_\ast=\mu_n\circ\cdots \circ\mu_2 \circ \mu_1,
\end{equation}
where the order $1,2,\ldots,n$ of the vertices is as in section \ref{sect 1}. It is shown in \cite{LLMSS} that the point $P_t$ in Definition \ref{def 1} is equal to the specialization of the cluster $\mu_\ast^t(\mathbf{x})$ at $x_i=1$.

For an arbitrary point $a_\ast=(a_1,\cdots,a_n)\in\CC^n$ with $a_i\neq0$ and any $1\le k\le n$, let
\[
	\mu_k(a_1,\cdots,a_n)=(a_1,\cdots,a_{k-1},a_k',a_{k+1},\cdots, a_n)\in \CC^n
\]
{where $a_k'$ is obtained from the cluster variable $x_k'$ by specializing the initial cluster variables $x_i=a_i$, $i=1,2,\ldots,n$.}  For generic $a_\ast$ we will have $a_k'\neq0$, so the Coxeter mutation can be repeated.
Let $\mu_\ast(a_\ast)=\mu_n\circ\cdots \circ \mu_1(a_\ast)$ .

\smallskip 

{
We propose the following generalization of  the frieze variety.}


\begin{defn}\label{def of X(Q,a)}
 \begin{enumerate}
\item We say that $a_\ast\in \CC^n$ is a \emph{generic specialization} of  $\mathbf{x}$ if all coordinates of $\mu_\ast^t(a_\ast)$ are nonzero for all $t\ge0$.
\item 
We refer to the set of all $\mu_\ast^t(a_\ast)\in\CC^n$ for $t\ge0$ as the \emph{$\mu_\ast$-orbit} of $a_\ast$.

\item For any generic specialization $a_\ast$ of $\mathbf x$, the \emph{generalized frieze variety} $X(Q,a_\ast)$ is defined to be the Zariski closure in $\CC^n$ of the {$\mu_\ast$-orbit} of $a_\ast$.
\item Let $\widetilde X(Q,a_\ast)$ be $X(Q,a_\ast)$ with zero dimensional components removed. So, $\widetilde X(Q,a_\ast)$ is empty when $X(Q,a_\ast)$ is finite, e.g. when $Q$ has finite type. (Theorem \ref{thm main}, Remark \ref{rem: X(Q,a) could be finite for Q tame}.)
\end{enumerate}
\end{defn}


\begin{rem}
\begin{enumerate}
\item  By the well-known Laurent Phenomenon proved by  Fomin and Zelevinsky in \cite{FZ}, the coordinates of $\mu_\ast^t(a_\ast)$ for any integer $t$ are given by Laurent polynomials in $a_1,\cdots,a_n$. Therefore, $\mu_\ast^t(a_\ast)$ is defined for all $t$ as long as $a_\ast\in(\CC^\times)^n$, i.e. $a_i\neq0$ for all $i$.

 Moreover, by the positivity theorem  proved in \cite{LS}, if $a_\ast$ is a  positive real vector, then all $\mu_\ast^t(a_\ast)$ are   positive real vectors. In particular, every positive real vector is a generic specialization.

\item The {frieze variety} of $Q$ is $X(Q)=X(Q,(1,1,\cdots,1))$.
\item We will see that all components of $X(Q,a_\ast)$ have the same dimension and, therefore, $\widetilde X(Q,a_\ast)=X(Q,a_\ast)$ when the set is infinite. (Theorem \ref{cor: dimension of components})
\end{enumerate}
\end{rem}
 

 We {will show} that the frieze variety is invariant under mutation in the following sense.
 
\begin{thm}\label{thm: generic points of X(Q) generate X(Q)}
 If $a_\ast\in\mathbb{C}^n$ is a {generic point} on the frieze variety, then $a_\ast$ is a generic specialization of $\mathbf x$ and $X(Q,a_\ast)=\widetilde X(Q)$. {More precisely, for each component $X_i$ of $X(Q)$ of dimension $\ge1$ there is a subset $U_i\subset X_i$ given as a countable intersection of open subsets $U_i^d$ so that, for any $a_\ast$ in any $U_i$ we have $X(Q,a_\ast)=\widetilde X(Q)$.}
\end{thm}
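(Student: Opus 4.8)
The plan is to exploit the fact that the $\mu_\ast$-orbit of the original frieze point $P_0=(1,\ldots,1)$ is Zariski dense in $X(Q)$ by definition, and that the Coxeter mutation acts on $\CC^n$ by a fixed rational map whose iterates are given by Laurent polynomials (by the Laurent phenomenon cited in the Remark). Write $\varphi=\mu_\ast\colon\CC^n\dashrightarrow\CC^n$ for this rational map. First I would restrict attention to a single positive-dimensional component $X_i$ of $X(Q)$; since $\varphi$ cyclically permutes the components (anticipating Theorem~\ref{thm: mu ast cyclically permutes components of X}, but here one only needs that $\varphi$ permutes them, which follows because $\varphi$ is a birational automorphism of $X(Q)$ carrying the dense orbit to a dense orbit), some power $\varphi^m$ maps $X_i$ birationally to itself, and it suffices to show that a generic point of $X_i$ has dense $\varphi^m$-orbit in $X_i$, hence dense $\varphi$-orbit in $X(Q)$, hence $X(Q,a_\ast)\supseteq \widetilde X(Q)$; the reverse inclusion $X(Q,a_\ast)\subseteq X(Q)$ is immediate because the whole orbit of $a_\ast$ lies in $X(Q)$ when $a_\ast\in X(Q)$ and $X(Q)$ is $\varphi$-invariant.

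The heart of the argument is to produce the sets $U_i^d$. Fix $d\ge 1$. Consider the subvariety $Z_d\subset X_i$ consisting of points $p$ such that the Zariski closure of $\{p,\varphi(p),\ldots,\varphi^N(p):N\ge0\}$ has dimension $<\dim X_i$, or more usefully, is contained in a proper subvariety. I would argue as follows: for each irreducible proper closed subset $W\subsetneq X_i$ defined over $\overline{\QQ}$ with bounded degree $\le d$, the set of points whose forward orbit lies in $W$ is itself closed; I want to show none of these "bad" sets is all of $X_i$. The key input is that $P_0$ (or rather its image in $X_i$ under the appropriate power of $\varphi$) has a dense orbit, so $P_0\notin \bigcup_W W$ where the union is over bad subvarieties — but this only controls one point. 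To get a \emph{generic} statement, I would instead show that the locus of points with orbit contained in \emph{some} proper subvariety is a countable union of proper closed subsets: parametrize proper subvarieties of degree $\le d$ by a Chow variety or Hilbert scheme, let $Y_d\subset X_i$ be the image of the incidence variety $\{(p,W): \varphi^t(p)\in W\ \forall t\ge 0,\ \deg W\le d\}$, note each fiber component gives a closed condition, and conclude $Y_d$ is constructible; its closure is proper in $X_i$ precisely because $P_0$'s orbit avoids it. Then set $U_i^d=X_i\setminus \overline{Y_d}$ and $U_i=\bigcap_d U_i^d$.

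The remaining steps are then: (1) check that for $a_\ast\in U_i$ the forward orbit is well-defined, i.e. never hits the indeterminacy locus of $\varphi$ nor a coordinate hyperplane — this is another countable union of proper closed subsets to remove, since it holds for $P_0$; so enlarge each $U_i^d$ accordingly. This also establishes that $a_\ast$ is a generic specialization of $\mathbf x$ in the sense of Definition~\ref{def of X(Q,a)}(1). (2) Conclude that $\overline{\{\varphi^t(a_\ast)\}}$ is a $\varphi$-invariant closed subset of $X_i$ not contained in any proper subvariety of bounded degree, hence equal to $X_i$; assembling over the components $X_i$ gives $X(Q,a_\ast)=\widetilde X(Q)$. (3) Handle the mismatch that $a_\ast$ might lie on $X_i$ but its orbit under $\varphi$ (not $\varphi^m$) could a priori wander — but since $\varphi$ permutes the components transitively on each $\varphi$-cycle and $X(Q)$ is the union over a single $\varphi$-orbit of components (again by the dense orbit of $P_0$), the orbit of $a_\ast$ sweeps out all of $\widetilde X(Q)$.

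The main obstacle I anticipate is step (2) of the middle paragraph: making precise that "the locus of points whose orbit lies in a proper subvariety of degree $\le d$" is a genuine countable union of proper Zariski-closed sets, and in particular that it is \emph{not} all of $X_i$. The non-obvious point is that we only know one point, the frieze point $P_0$, has a dense orbit; upgrading this to "the bad locus is proper" requires either (a) showing the bad locus is closed and $P_0$ avoids it, or (b) a dynamical argument that a birational self-map of an irreducible variety whose orbit closure has full dimension at one point has full-dimensional orbit closure at a generic point. Option (a) needs care because "orbit contained in $W$" for a \emph{fixed} $W$ is closed, but we are quantifying over infinitely many $W$; the Hilbert-scheme/Chow-variety parametrization makes the union over degree-$\le d$ subvarieties into the image of a proper morphism from a variety, which is constructible and, crucially, proper in $X_i$ iff it misses $P_0$. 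I expect the authors instead argue more directly via the structure of $\varphi$ as a monomial-times-polynomial map and the de la Peña eigenvalue estimates, but the Hilbert-scheme route above is the approach I would take.
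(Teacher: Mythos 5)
Your overall strategy --- reduce to equations of bounded degree $d$, show that the ``degree-$d$ bad locus'' in each component $X_i$ is contained in a proper closed subset using the one point ($P_0$, suitably mutated) known to have dense orbit, and take $U_i=\bigcap_d U_i^d$ --- is exactly the architecture of the paper's proof (Lemma \ref{lem: Uid} feeding into Theorem \ref{thm: generic points of X(Q,b) generate X(Q,b)}), and your preliminary reductions (invariance of $X(Q)$ under $\mu_\ast$, the cyclic permutation of components) are the paper's Lemma \ref{lem1: mu sends X to X} and Theorems \ref{thm: mu ast cyclically permutes components of X}, \ref{cor: dimension of components}; note that the ``immediate'' inclusion $X(Q,a_\ast)\subseteq X(Q)$ already requires the clearing-denominators argument of Lemma \ref{lem1: mu sends X to X}. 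But there is a genuine gap at the step you yourself flag as the main obstacle, and your proposed resolution does not close it: from ``the bad locus is \emph{constructible} and misses $P_0$'' you cannot conclude that its closure is proper in $X_i$ --- a constructible set can be dense and still miss a point. The incidence set $\{(p,W):\varphi^t(p)\in W\ \forall t\}$ is closed, but you must intersect it with the \emph{open} condition $W\not\supseteq X_i$ before projecting, and that is precisely what destroys closedness of the image. So option (a) is not established as written, and option (b) (a general dynamical statement) is not proved either.

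The paper closes this gap by discarding the Chow/Hilbert machinery entirely: since $X(Q,a_\ast)\neq X(Q)$ exactly when some single polynomial vanishes on the orbit but not on $X(Q)$, only hypersurfaces matter, and the parameter space becomes the \emph{linear} space of polynomials of degree $\le d$, of dimension $\binom{n+d}{n}$. Composing with the degree-$d$ Veronese map $P_d:\CC^n\to\CC^{\binom{n+d}{n}}$, the orbit of $b_\ast$ spans a fixed subspace $B_d$, and a degree-$\le d$ polynomial vanishes on $X(Q)$ iff the corresponding linear functional kills $B_d$. Finitely many orbit points $P_d(\mu_\ast^{t_j}(b_\ast))$, $t_j\le p_d$, already span $B_d$, witnessed by a nonvanishing $k\times k$ minor $M$; expressing $M$ as a Laurent polynomial in the coordinates of a point of $X_i$ and taking its numerator $F_i$ gives a polynomial with $F_i(\mu_\ast^i(b_\ast))\neq 0$, so $\{F_i\neq 0\}$ (intersected with the finitely many nonvanishing conditions guaranteeing the orbit is defined up to time $p_d$) is the dense open set $U_i^d$. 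For $a_\ast\in U_i^d$ the vectors $P_d(\mu_\ast^t(a_\ast))$, $0\le t\le p_d$, still span $B_d$ (they lie in $B_d$ by Lemma \ref{lem1: mu sends X to X}), so any degree-$\le d$ polynomial killing them kills $B_d$ and hence $X(Q)$. This is your option (a), made to work by semicontinuity of rank on a finite linear system rather than by properness of a Chow-variety projection; it is also entirely elementary and makes no use of the de la Pe\~na eigenvalue estimates you guessed the authors would invoke.
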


\begin{cor}
 If $a_\ast\in X(Q)$ is a generic point then $\mu_\ast(a_\ast)\in X(Q)$.
\end{cor}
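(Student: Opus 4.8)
The plan is to deduce the corollary directly from Theorem~\ref{thm: generic points of X(Q) generate X(Q)}, which I may invoke. I would take $a_\ast \in X(Q)$ to be a generic point in the precise sense of that theorem, i.e.\ $a_\ast$ lies in the set $U_i$ (a countable intersection of open subsets) which the theorem attaches to a component $X_i$ of $X(Q)$ of dimension $\ge 1$. The theorem then supplies exactly the two facts I need: first, $a_\ast$ is a generic specialization of $\mathbf{x}$, so that every coordinate of every iterate $\mu_\ast^t(a_\ast)$ is nonzero and in particular $\mu_\ast(a_\ast)$ is defined; and second, $X(Q,a_\ast) = \widetilde X(Q)$.

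From here the argument is immediate. By Definition~\ref{def of X(Q,a)}, $X(Q,a_\ast)$ is the Zariski closure of the $\mu_\ast$-orbit $\{\mu_\ast^t(a_\ast): t\ge 0\}$ of $a_\ast$, and therefore it contains every point of that orbit; taking $t=1$ gives $\mu_\ast(a_\ast)\in X(Q,a_\ast)$. Since $X(Q,a_\ast) = \widetilde X(Q) \subseteq X(Q)$, we obtain $\mu_\ast(a_\ast)\in X(Q)$, which is the assertion. I would remark that only the containment of the orbit inside $X(Q,a_\ast)$ is used, not the density half of the closure, and that iterating the same observation yields $\mu_\ast^t(a_\ast)\in X(Q)$ for all $t\ge 0$, so that $\mu_\ast$ restricts to a self-map of $X(Q)$ defined on a dense subset.

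The only point requiring a word of care is the meaning of ``generic point'' on a zero-dimensional component of $X(Q)$, about which Theorem~\ref{thm: generic points of X(Q) generate X(Q)} says nothing: there the $\mu_\ast$-orbit of any point is finite and $\mu_\ast$ merely permutes the finitely many points of the zero-dimensional part of $X(Q)$ (the periodicity underlying Theorem~\ref{thm: mu ast cyclically permutes components of X}), so the conclusion still holds; equivalently, one restricts ``generic'' to mean ``on a component of dimension $\ge 1$''. Thus I expect no genuine obstacle in the corollary itself — all of the work lies in Theorem~\ref{thm: generic points of X(Q) generate X(Q)}, whose proof must show that the $\mu_\ast$-orbit of a generic point of $X(Q)$ is again Zariski dense in $\widetilde X(Q)$; once that is granted, the present statement is little more than unwinding the definition of $X(Q,a_\ast)$.
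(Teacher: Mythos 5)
Your proposal is correct and is exactly the intended argument: the paper states the corollary without proof because it follows immediately from Theorem~\ref{thm: generic points of X(Q) generate X(Q)} in just the way you describe — $\mu_\ast(a_\ast)$ lies in the $\mu_\ast$-orbit of $a_\ast$, hence in its Zariski closure $X(Q,a_\ast)=\widetilde X(Q)\subseteq X(Q)$. Your side remark about zero-dimensional components is a sensible clarification but not something the paper dwells on.
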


We note that 
frieze varieties often have nongeneric points. See, e.g., Remark \ref{rem: X contains nongeneric points} and the end of Example \ref{eg: Kronecker}.


\section{Proof of Theorem \ref{thm: generic points of X(Q) generate X(Q)}}

We prove a more general result (Theorem \ref{thm: generic points of X(Q,b) generate X(Q,b)} below) using the following lemmas. 


\begin{lem}\label{lem1: mu sends X to X}
 If $b_\ast$ is a generic specialization of $\mathbf x$ and $a_\ast\in \widetilde X(Q,b_\ast)\cap (\CC^\times)^n$ then \[\mu_\ast (a_\ast)\in\widetilde X(Q,b_\ast).\] 
\end{lem}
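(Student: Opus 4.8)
The plan is to use the fact that the $\mu_\ast$-orbit of $b_\ast$ is Zariski dense in $X(Q,b_\ast)$ by construction, together with the continuity of the Coxeter mutation as a rational (indeed Laurent-polynomial) self-map on $(\CC^\times)^n$. First I would note that $\mu_\ast$ is given coordinatewise by Laurent polynomials in $a_1,\ldots,a_n$ (Laurent Phenomenon, \cite{FZ}), hence defines a morphism $\mu_\ast\colon (\CC^\times)^n\to\CC^n$; it is continuous in the Zariski topology, and in fact restricts to a regular self-map on the open set $V$ where all coordinates of $\mu_\ast(a_\ast)$ are again nonzero. Since $b_\ast$ is a generic specialization, its entire forward orbit lies in $(\CC^\times)^n$, and I want to conclude $\mu_\ast$ maps a suitable dense subset of $\widetilde X(Q,b_\ast)$ back into $\widetilde X(Q,b_\ast)$.

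The key steps, in order: (1) Observe $\mu_\ast$ sends the orbit $\{\mu_\ast^t(b_\ast): t\ge 0\}$ into itself (it just shifts $t\mapsto t+1$). (2) Let $Y=\overline{\{\mu_\ast^t(b_\ast):t\ge 1\}}$, the Zariski closure of the orbit with the first point removed; since removing finitely many points does not change the closure of an infinite set, $Y=X(Q,b_\ast)$ (this needs the orbit to be infinite, which is exactly the hypothesis that guarantees $\widetilde X(Q,b_\ast)$ is nonempty — for the zero-dimensional case the statement is vacuous since the intersection with $(\CC^\times)^n$ of a finite set can still be handled, or rather $\widetilde X$ is empty and the conclusion $\mu_\ast(a_\ast)\in\widetilde X(Q,b_\ast)=\emptyset$ forces $a_\ast\notin\widetilde X(Q,b_\ast)$, making the implication vacuously true). (3) By continuity of $\mu_\ast$, $\mu_\ast\big(\overline{\{\mu_\ast^t(b_\ast):t\ge 0\}}\big)\subseteq \overline{\mu_\ast\{\mu_\ast^t(b_\ast):t\ge 0\}}=\overline{\{\mu_\ast^t(b_\ast):t\ge 1\}}=X(Q,b_\ast)$, at least on the locus where $\mu_\ast$ is defined, i.e. on $X(Q,b_\ast)\cap (\CC^\times)^n$ intersected with the domain $V$. (4) Finally, restrict to $\widetilde X(Q,b_\ast)$: since $\mu_\ast$ is a dominant (birational) map sending the dense orbit of a positive-dimensional component onto the dense orbit of a component of the same type, it cannot collapse a positive-dimensional component to a point, so $\mu_\ast\big(\widetilde X(Q,b_\ast)\cap (\CC^\times)^n\big)\subseteq \widetilde X(Q,b_\ast)$. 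Here one should invoke (or prove in passing) that $\mu_\ast$ is a birational automorphism of $\CC^n$ — it is a composition of the individual mutations $\mu_k$, each of which is an involution birational automorphism — so it preserves dimension of irreducible subvarieties on which it is defined.

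The main obstacle I anticipate is the boundary/domain-of-definition bookkeeping in step (3)–(4): $\mu_\ast$ is only a rational map on $\CC^n$, so to legitimately apply "image of closure is contained in closure of image" I must work on the open subset where $\mu_\ast$ is regular and check that this subset meets every relevant component densely, and then argue that the finitely many orbit points possibly landing outside $(\CC^\times)^n$ or outside the domain do not affect the closure. The hypothesis $a_\ast\in (\CC^\times)^n$ is there precisely to ensure $a_\ast$ lies in the domain, and the hypothesis that $b_\ast$ is a generic specialization ensures the whole orbit avoids the bad locus; the care needed is to package these into a clean density argument. A secondary, smaller point is verifying that $\mu_\ast$ does not drop the dimension of a positive-dimensional component — this follows from birationality of $\mu_\ast$, but deserves an explicit sentence rather than being taken for granted.
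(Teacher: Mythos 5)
Your core argument is the same as the paper's: the Coxeter mutation is given by Laurent polynomials, hence is a regular (so Zariski-continuous) map on $(\CC^\times)^n$, and since it shifts the orbit of $b_\ast$, which is dense in $X(Q,b_\ast)$, the inclusion $\mu_\ast\bigl(X(Q,b_\ast)\cap(\CC^\times)^n\bigr)\subseteq X(Q,b_\ast)$ follows; the paper phrases this by clearing denominators to get polynomials $F_j(x_\ast)=f_j(\mu_\ast(x_\ast))g_j(x_\ast)$ that vanish on the orbit and hence on its closure, which is just the algebraic form of your ``image of closure lies in closure of image.'' Where you diverge is the reduction from $X$ to $\widetilde X$. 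Two remarks there. First, your step (2) claim that removing finitely many points from an infinite set never changes its Zariski closure is false as stated: removing an orbit point that happens to be an isolated point of $X(Q,b_\ast)$ does shrink the closure. This does not break your argument, since you only need $\overline{\{\mu_\ast^t(b_\ast):t\ge1\}}\subseteq X(Q,b_\ast)$, but the sentence should be fixed. Second, your step (4) is the real extra work: continuity alone only lands $\mu_\ast(a_\ast)$ in $X(Q,b_\ast)$, and you must exclude the finitely many isolated points. Your birationality idea works, but the cleanest version is to note that $\mu_\ast\bigl(X_i\cap(\CC^\times)^n\bigr)$ is irreducible and contains the infinitely many distinct points $\mu_\ast^{t+1}(b_\ast)$ with $\mu_\ast^t(b_\ast)\in X_i$ (distinct, else the orbit is eventually periodic and $\widetilde X$ is empty), so it is infinite and irreducible, hence lies in a single positive-dimensional component, i.e.\ in $\widetilde X(Q,b_\ast)$. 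The paper sidesteps this entirely by first replacing $b_\ast$ with $\mu_\ast^{t_0}(b_\ast)$ for $t_0$ large enough that the tail orbit avoids the isolated points, so that $\widetilde X(Q,b_\ast)=X(Q,\mu_\ast^{t_0}(b_\ast))$ and the whole question of collapsing never arises; you may want to adopt that shortcut, as it shortens step (4) to nothing.
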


\begin{proof} Since $\widetilde X(Q,b_\ast)$ contains all but finitely many elements of $X(Q,b_\ast)$, $\widetilde X(Q,b_\ast)=X(Q,\mu_\ast^t(b_\ast))$ for sufficiently large $t>0$. Replacing $b_\ast$ with $\mu_\ast^t(b_\ast)$, we may therefore assume that $\widetilde X(Q,b_\ast)=X(Q,b_\ast)$ contains $\mu_\ast^t(b_\ast)$ for all $t\ge0$.

The variety $X(Q,b_\ast)$ is given by a finite number of polynomials $f_j$. For any $a_\ast\in (\CC^\times)^n$, $\mu_\ast(a_\ast)\in \CC^n$ is well-defined and lies in $X(Q,b_\ast)$ if and only if $f_j(\mu_\ast(a_\ast))=0$ for all $j$. Since the coordinates of $\mu_\ast(x_\ast)$ are Laurent polynomials in $x_1,\cdots,x_n$, each $f_j(\mu_\ast(x_\ast))$ is also a Laurent polynomial in the $x_i$. So, there are monomials $g_j(x_\ast)$ with the property that
\begin{equation}\label{polynomials for mu(a)}
	F_j(x_\ast):=f_j(\mu_\ast(x_\ast))g_j(x_\ast)\in \CC[x_1,\cdots,x_n].
\end{equation}
The polynomials $F_j$ have the property that, for any $a_\ast\in (\CC^\times)^n$, $\mu_\ast(a_\ast)\in X(Q,b_\ast)$ if and only if $F_j(a_\ast)=0$ for all $j$. Since $\mu_\ast^t(b_\ast)\in X(Q,b_\ast)$ for all $t\ge0$, this implies that $F_j(\mu_\ast^t(b_\ast))=0$ for all $t\ge 0$. This implies that $F_j=0$ on the Zariski closure of this set of points: $X(Q,b_\ast)$. 
 Now let $a_\ast\in X(Q,b_\ast)$,  then $F_j(a_\ast)=0$ for all $j$,  and by the above argument, $\mu_\ast(a_\ast)\in X(Q,b_\ast)=\widetilde X(Q,b_\ast)$ as claimed.
\end{proof}


In order to state the main result (Theorem \ref{thm: generic points of X(Q,b) generate X(Q,b)} which will generalize Theorem \ref{thm: generic points of X(Q) generate X(Q)}), we need to consider the irreducible components of the variety $\widetilde X(Q,b_\ast)$. We will show that the the Coxeter mutation $\mu_\ast$ cyclically permutes these components. 

\begin{thm}\label{thm: mu ast cyclically permutes components of X}
Let $b_\ast$ be any generic specialization of the  cluster  $\mathbf{x}$, for example, $b_\ast=(1,1,\cdots,1)$. Choose $t_0\ge0$ so that $\mu_\ast^t(b_\ast)\in\widetilde X(Q,b_\ast)$ for all $t\ge t_0$. Then the components of $\widetilde X(Q,b_\ast)=X(Q,\mu_\ast^{t_0}(b_\ast))$ can be numbered $X_1,\cdots,X_m$ with the following properties.
\begin{enumerate}
\item For each $t\ge t_0$, $\mu_\ast^t(b_\ast)\in X_i$ if and only if $t\equiv i$ modulo $m$. In particular, for each $t\ge t_0$, $\mu_\ast^t(b_\ast)$ lies in exactly one $X_i$.
\item $X_i$ is the closure of the set of all $\mu_\ast^{i+km}(b_\ast)$ for all integers $k\ge t_0/m$.
\item 
For any $a_\ast\in X_i\cap (\CC^\times)^n$ we have $\mu_\ast(a_\ast)\in X_{i+1}$ (or $X_1$ if $i=m$).
\end{enumerate}
\end{thm}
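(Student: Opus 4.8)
The plan is to prove the three assertions in sequence, using Lemma \ref{lem1: mu sends X to X} as the main engine and extracting the cyclic structure from the dynamics of a single orbit.

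First I would set up the reduction. By replacing $b_\ast$ with $\mu_\ast^{t_0}(b_\ast)$ we may assume $\widetilde X(Q,b_\ast)=X(Q,b_\ast)$ and that the whole forward orbit $\{\mu_\ast^t(b_\ast):t\ge0\}$ lies in $X(Q,b_\ast)$; the variety is the Zariski closure of this orbit by definition. Write $X(Q,b_\ast)=X^{(1)}\cup\cdots\cup X^{(m)}$ as its irreducible decomposition (all components of positive dimension, since we have passed to $\widetilde X$). Because the orbit is dense, for each $j$ the set $T_j:=\{t\ge0:\mu_\ast^t(b_\ast)\in X^{(j)}\}$ is infinite (if some $T_j$ were finite, the orbit would be dense in the union of the other components, contradicting that $X^{(j)}$ is a genuine irreducible component not contained in the others). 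Conversely, for $t$ large enough, $\mu_\ast^t(b_\ast)$ lies on only those components it visits infinitely often, so after enlarging $t_0$ again we may assume each $\mu_\ast^t(b_\ast)$ lies in exactly one component — here I would use that the orbit point eventually avoids the (lower-dimensional, proper) pairwise intersections $X^{(i)}\cap X^{(j)}$, because a point of the orbit lying in such an intersection for infinitely many $t$ would force the orbit closure to have a component inside $X^{(i)}\cap X^{(j)}$, again a contradiction. So we get a well-defined map $c:\ZZ_{\ge t_0}\to\{1,\dots,m\}$ sending $t$ to the index of the unique component containing $\mu_\ast^t(b_\ast)$.

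Next comes the heart of the argument: showing $c$ is "eventually periodic with period $m$ and cyclic." The key input is that $\mu_\ast$ permutes the components. To see this, note $\mu_\ast$ restricted to $(\CC^\times)^n$ is a birational map given by Laurent polynomials, hence maps any irreducible variety into an irreducible variety; by Lemma \ref{lem1: mu sends X to X}, $\mu_\ast(X^{(j)}\cap(\CC^\times)^n)\subseteq X(Q,b_\ast)$, and taking Zariski closure, the closure $\overline{\mu_\ast(X^{(j)})}$ is an irreducible subvariety of $X(Q,b_\ast)$, hence contained in some component $X^{(\sigma(j))}$. Applying the same reasoning to $\mu_\ast^{-1}$ (also Laurent, by the Laurent phenomenon) shows $X^{(j)}\subseteq\overline{\mu_\ast(X^{(\tau(j))})}\subseteq X^{(\sigma\tau(j))}$, forcing $\sigma\tau=\mathrm{id}$ and likewise $\tau\sigma=\mathrm{id}$, so $\sigma$ is a permutation of $\{1,\dots,m\}$ and $\overline{\mu_\ast(X^{(j)})}=X^{(\sigma(j))}$. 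Consequently $c(t+1)=\sigma(c(t))$ for all $t\ge t_0$, so $c(t)=\sigma^{t-t_0}(c(t_0))$. Since the orbit is dense in $X(Q,b_\ast)=\bigcup_j X^{(j)}$ and every component is visited, the orbit of $c(t_0)$ under $\sigma$ must be all of $\{1,\dots,m\}$; that is, $\sigma$ is an $m$-cycle. Relabelling the components as $X_1,\dots,X_m$ so that $X_i:=X^{(\sigma^{i-c(t_0)}(c(t_0)))}$ — equivalently so that $\mu_\ast^t(b_\ast)\in X_i \iff t\equiv i \bmod m$ for $t\ge t_0$ — gives assertion (1), and $\overline{\mu_\ast(X_i)}=X_{i+1}$ (indices mod $m$) gives assertion (3) after one more check: for $a_\ast\in X_i\cap(\CC^\times)^n$, $\mu_\ast(a_\ast)\in\overline{\mu_\ast(X_i)}=X_{i+1}$. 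For assertion (2): since $\mu_\ast^{i+km}(b_\ast)\in X_i$ for all $k\ge t_0/m$ by (1), their closure is contained in $X_i$; and it must equal $X_i$ because $X_i$ is irreducible and the total orbit is dense in $\bigcup X_j$, so the part of the orbit landing in the irreducible component $X_i$ is dense in $X_i$ (a component cannot be covered by the closure of the orbit points lying outside it, as those lie in $\bigcup_{j\ne i}X_j$ which does not contain $X_i$).

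I expect the main obstacle to be the bookkeeping in the reduction step — making rigorous the claim that after shifting $t_0$ the orbit point lies on exactly one component and that the "infinitely often" sets $T_j$ are genuine arithmetic progressions. The cleanest route is probably to argue entirely at the level of the permutation $\sigma$: once $\overline{\mu_\ast(X_i)}=X_{i+\text{(shift)}}$ is established as a permutation, eventual one-componentness of orbit points follows formally, because if $\mu_\ast^t(b_\ast)$ were in two components infinitely often, density would put a component of $X(Q,b_\ast)$ inside a proper intersection. A secondary subtlety is ensuring $\mu_\ast$ and $\mu_\ast^{-1}$ are both defined as Laurent (hence dominant rational) self-maps on the torus $(\CC^\times)^n$ — this is exactly the Laurent phenomenon of \cite{FZ} already invoked in the Remark, applied to $\mu_\ast^{\pm t}$, so it is available.
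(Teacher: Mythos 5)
Your overall strategy is the same as the paper's: reduce to the case $\widetilde X(Q,b_\ast)=X(Q,b_\ast)$, use Lemma \ref{lem1: mu sends X to X} together with irreducibility of $\overline{\mu_\ast(X^{(j)}\cap(\CC^\times)^n)}$ to get an induced self-map $\sigma$ of the set of components, use density of the orbit to see $\sigma$ is transitive, and read off (1)--(3). Your route to ``$\sigma$ is a permutation'' via $\mu_\ast^{-1}$ is a legitimate variant of the paper's (which deduces it from transitivity instead); it does require the analogue of Lemma \ref{lem1: mu sends X to X} for $\mu_\ast^{-1}$, which in turn needs that $\{\mu_\ast^t(b_\ast):t\ge 1\}$ is still dense in $X(Q,b_\ast)$ --- true in the infinite case since all components have positive dimension, but worth saying.

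The genuine gap is your justification of the ``exactly one component'' claim, which is the whole content of the ``only if'' direction of (1). You argue that if $\mu_\ast^t(b_\ast)\in X^{(i)}\cap X^{(j)}$ for infinitely many $t$, the orbit closure would acquire a component inside $X^{(i)}\cap X^{(j)}$. That deduction is invalid: the closure of such a sub-orbit is a closed subset of $X^{(i)}\cap X^{(j)}$, so each of its irreducible pieces is properly contained in the component $X^{(i)}$ and hence is never an irreducible component of the full orbit closure; density of the whole orbit is perfectly compatible with infinitely many orbit points lying on a proper subvariety so long as the remaining points are dense. Your proposed ``cleanest route'' at the end repeats the same invalid step. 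Moreover, even if repaired, your ``enlarge $t_0$'' move only yields uniqueness for large $t$, whereas (1) asserts it for every $t\ge t_0$. The correct argument (the paper's Claim 2) runs in the opposite order and uses the cyclic structure rather than density alone: after the labelling is fixed, suppose $\mu_\ast^t(b_\ast)\in X_i\cap X_j$ with $i\equiv t\pmod m$ and $j\ne i$; applying statement (3) repeatedly to the point viewed in $X_j$ shows that every $\mu_\ast^s(b_\ast)$ with $s\ge t$ and $s\equiv i$ lies in $X_j$, while those with $s\not\equiv i$ lie in $X_{s\bmod m}$, so the entire tail of the orbit lies in $\bigcup_{p\ne i}X_p$; taking closures forces $X_i\subseteq\bigcup_{p\ne i}X_p$, contradicting that $X_i$ is an irreducible component. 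You should therefore establish $\sigma$ and the labelling first, using only that each orbit point lies in \emph{some} component of the right index, and derive uniqueness afterwards by this push-forward argument.
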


\begin{proof}
By replacing $b_\ast$ with $\mu_\ast^{t_0}(b_\ast)$ we may assume that $t_0=0$ and $\widetilde X(Q,b_\ast)=X(Q,b_\ast)$.

Let $X_1,\cdots,X_m$ be the components of $X(Q,b_\ast)$.

Consider the polynomials $f_{jk}$ which define the component $X_j$. As in \eqref{polynomials for mu(a)}, there are polynomials $F_{jk}$ with the property that, for any $a_\ast\in (\CC^\times)^n$, $\mu_\ast(a_\ast)\in X_j$ if and only if $F_{jk}(a_\ast)=0$ for all $k$. Then the polynomials $F_{jk}$ and $f_{ip}$ define a subvariety $X_{ij}$ of $X_i$ which contains all $a_\ast\in (\CC^\times)^n\cap X_i$ so that $\mu_\ast(a_\ast)\in X_j$. By Lemma \ref{lem1: mu sends X to X}, $\mu_\ast(a_\ast)\in \widetilde X(Q,b_\ast)=\cup X_j$, for all $a_\ast\in (\CC^\times)^n\cap X_i$. Therefore, $X_i$ is the union of the subvarieties $X_{ij}$. Since $X_i$ is irreducible, $X_i=X_{ij}$ for some $j$. In fact $j$ is uniquely determined by $i$, but we don't need to verify this.

The equation $X_i=X_{ij}$ implies that, for any $a_\ast\in X_i\cap (\CC^\times)^n$, $\mu_\ast(a_\ast)\in X_j$. For each $i$, choose one such $j$. Then $\pi(i)=j$ defines a mapping of the set $\{1,2,\cdots,m\}$ to itself. Statement (3) will follow from this after we show that $\pi$ is a cyclic permutation.

\underline{Claim 1}: $\pi$ is a permutation which is transitive, i.e. $\pi$ is an $m$-cycle.

Proof:  Since $b_\ast\in X(Q,b_\ast)$, it must lie in one of the components; suppose that $b_\ast\in X_i$. Then $\mu_\ast(b_\ast)\in X_{\pi(i)}$ and $\mu_\ast^t(b_\ast)\in X_{\pi^t(i)}$. Thus, the $\mu_\ast$-orbit of $b_\ast$ lies in the union of all $X_j$ where $j$ is in the $\pi$-orbit of $i$. But the closure of the $\mu_\ast$-orbit of $b_\ast$ is the union of all the $X_j$. So $\pi$ must be transitive, which also implies $\pi$ is a permutation  and, in particular, an $m$-cycle.

Since $\pi$ is an $m$-cycle, we may reindex the sets $X_i$ so that $\pi(i)=i+1$ for $i<m$ and $\pi(m)=1$ and so that $b_\ast\in X_m$.

\underline{Claim 2}: For each $t\ge0$, $\mu_\ast^t(b_\ast)$ lies in only one $X_i$: the one where $i\equiv t$ mod $m$.

Proof: Suppose not. Then $\mu_\ast^t(b_\ast)\in X_i\cap X_j\subset X_j$ where $j\neq i$.  By the division algorithm, any $s\ge t$ can be written as $s=p+km$ where $1\le p<m$. If $p\neq i$,  then $\mu_\ast^s(b_\ast)\in X_p$. If $p=i$ then $\mu_\ast^s(b_\ast)\in X_j$. So, the set of all $\mu_\ast^s(b_\ast)$ for all $s\ge t$ lies in the union of all $X_p$ for $p\neq i$ which is a contradiction.

These claims prove Statement (1). Statement (2) follows from the definition of $X_i$.
\end{proof}

Theorem \ref{thm: mu ast cyclically permutes components of X} can be strengthened as follows.

\begin{thm}\label{cor: dimension of components} For all generic specializations $b_\ast$ of $\bf x$, all components of $X(Q,b_\ast)$ have the same dimension. In particular, if $X(Q,b_\ast)$ is infinite, then $\widetilde X(Q,b_\ast)=X(Q,b_\ast)$ and $t_0=0$ in Theorem \ref{thm: mu ast cyclically permutes components of X} above.
\end{thm}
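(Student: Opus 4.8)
The plan is to use the cyclic permutation structure from Theorem~\ref{thm: mu ast cyclically permutes components of X} together with the fact that $\mu_\ast$ is a birational self-map of $(\CC^\times)^n$. By Theorem~\ref{thm: mu ast cyclically permutes components of X}(3), for each component $X_i$ of $\widetilde X(Q,b_\ast)$ the Coxeter mutation sends a dense open subset $X_i\cap(\CC^\times)^n$ of $X_i$ into $X_{i+1}$. First I would observe that the coordinates of $\mu_\ast(x_\ast)$ and of $\mu_\ast^{-1}(x_\ast)$ are Laurent polynomials in the $x_i$ (the Laurent phenomenon of~\cite{FZ}, already invoked), so $\mu_\ast$ restricts to a morphism $(\CC^\times)^n\to(\CC^\times)^n$ whose inverse is also a morphism on the locus where all coordinates are nonzero; in particular $\mu_\ast$ is an isomorphism between two dense open subsets of $(\CC^\times)^n$ and hence does not change the dimension of any subvariety it maps into $(\CC^\times)^n$ with dense image.

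The key step is then: the restriction $\mu_\ast\colon X_i\cap(\CC^\times)^n\to X_{i+1}$ has Zariski-dense image in $X_{i+1}$. To see this, note that by Theorem~\ref{thm: mu ast cyclically permutes components of X}(2) the component $X_{i+1}$ is the closure of $\{\mu_\ast^{(i+1)+km}(b_\ast)\}_{k\ge t_0/m}$, and each such point equals $\mu_\ast(\mu_\ast^{i+km}(b_\ast))$ with $\mu_\ast^{i+km}(b_\ast)\in X_i\cap(\CC^\times)^n$ by part~(1) and by genericity of $b_\ast$. So the image of $\mu_\ast$ on $X_i\cap(\CC^\times)^n$ already contains a dense subset of $X_{i+1}$. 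Since $\mu_\ast$ is injective on $(\CC^\times)^n$ (it has a regular inverse there), the map $X_i\cap(\CC^\times)^n\to X_{i+1}$ is a dominant morphism between irreducible varieties which is generically injective, hence $\dim X_{i+1}=\dim\big(X_i\cap(\CC^\times)^n\big)=\dim X_i$ (the last equality because $X_i\cap(\CC^\times)^n$ is a nonempty open subset of the irreducible variety $X_i$; it is nonempty because it contains $\mu_\ast^{i+km}(b_\ast)$).

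Running $i$ through the $m$-cycle of Theorem~\ref{thm: mu ast cyclically permutes components of X} gives $\dim X_1=\dim X_2=\cdots=\dim X_m$, so all positive-dimensional components of $X(Q,b_\ast)$ share a common dimension $d\ge1$. It remains to rule out the presence of a $0$-dimensional component when $X(Q,b_\ast)$ is infinite: but $X(Q,b_\ast)$ is by definition the closure of the single $\mu_\ast$-orbit of $b_\ast$, and Theorem~\ref{thm: mu ast cyclically permutes components of X}(1)--(2) shows (after passing to $\mu_\ast^{t_0}(b_\ast)$) that every point $\mu_\ast^t(b_\ast)$ with $t\ge t_0$ lies on one of the $X_i$ and each $X_i$ is the closure of an infinite subset of the orbit; a component disjoint from all $\mu_\ast^t(b_\ast)$, $t\ge t_0$, would be superfluous. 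Hence if the orbit is infinite every component is one of the $X_i$, all have dimension $d\ge1$, and in particular there are no zero-dimensional components, so $\widetilde X(Q,b_\ast)=X(Q,b_\ast)$ and one may take $t_0=0$.

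I expect the main obstacle to be the bookkeeping around the locus where $\mu_\ast$ is undefined or sends a coordinate to $0$: one must argue that intersecting with $(\CC^\times)^n$ removes only a proper closed subset of each irreducible $X_i$, so that dimensions are preserved, and that the ``bad'' locus does not accidentally swallow an entire component. This is handled by genericity of $b_\ast$ (so the orbit avoids the bad locus) and irreducibility of the $X_i$, but it is the point that needs care rather than the dimension count itself, which is then immediate from dominance plus generic injectivity of $\mu_\ast$.
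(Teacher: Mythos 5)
Your argument for the first assertion---that the components $X_1,\dots,X_m$ of $\widetilde X(Q,b_\ast)$ all have the same dimension---is correct and is essentially the paper's own argument: since $\mu_\ast$ and $\mu_\ast^{-1}$ are given by Laurent polynomials, $\mu_\ast$ restricts to a dominant, generically injective map $X_i\cap(\CC^\times)^n\to X_{i+1}$ (dominant because by Theorem~\ref{thm: mu ast cyclically permutes components of X}(2) its image contains a dense set of orbit points, generically injective because $\mu_\ast^{-1}\circ\mu_\ast=\mathrm{id}$ as rational maps), so the $X_i$ are birationally equivalent and equidimensional.

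The gap is in the final step, where you deduce $\widetilde X(Q,b_\ast)=X(Q,b_\ast)$ in the infinite case. You dismiss a potential zero-dimensional component on the grounds that a component ``disjoint from all $\mu_\ast^t(b_\ast)$, $t\ge t_0$, would be superfluous.'' But $X(Q,b_\ast)$ is the closure of the \emph{entire} orbit starting at $t=0$, and a priori an initial point $\mu_\ast^t(b_\ast)$ with $t<t_0$ need not lie on $\bigcup_i X_i$; in that case the singleton $\{\mu_\ast^t(b_\ast)\}$ is a genuine irreducible component of $X(Q,b_\ast)$, zero-dimensional and not at all superfluous, since the variety must contain that orbit point. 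The substance of this half of the theorem is exactly the claim that $t_0=0$, i.e.\ that every orbit point already lies on one of the $X_i$, and your write-up assumes this rather than proving it. The paper closes the gap with a separate argument: if $\mu_\ast^{t}(b_\ast)\notin\widetilde X(Q,b_\ast)$ while $\mu_\ast^{t+1}(b_\ast)\in X_i$, choose a polynomial $f$ vanishing on $X_{i-1}$ but not at $\mu_\ast^{t}(b_\ast)$; the numerator $g$ of the Laurent function $f\circ\mu_\ast^{-1}$ vanishes on the dense set of orbit points of $X_i$ coming from $X_{i-1}$ (Theorem~\ref{thm: mu ast cyclically permutes components of X}(2)), hence on all of $X_i$, yet $g(\mu_\ast^{t+1}(b_\ast))$ equals $f(\mu_\ast^{t}(b_\ast))$ times a nonzero monomial value, a contradiction. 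Some argument of this kind---pulling the vanishing locus back along $\mu_\ast^{-1}$---is required; it does not follow from the cyclic-permutation structure and the dimension count alone.
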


\begin{proof}
The Coxeter mutation $\mu_\ast$ and its inverse are given by Laurent polynomials which are rational functions whose denominators are monomials. By Theorem \ref{thm: mu ast cyclically permutes components of X} $\mu_\ast$ gives a bijection between dense subsets of $X_i,X_{i+1}$ which are disjoint from the coordinate hyperplanes. Therefore, the components $X_i$ of $\widetilde X(Q,b_\ast)$ are birationally equivalent and therefore have the same dimension since the dimension of an irreducible variety is the transcendence degree of its field of rational functions. When $X(Q,b_\ast)$ is finite, the Coxeter mutation clearly acts as a cyclic permutation of that set. So, assume $X(Q,b_\ast)$ is infinite.

To see that $\widetilde X(Q,b_\ast)=X(Q,b_\ast)$ in the infinite case, suppose not. Then there must one point $\mu_\ast^t(b_\ast)$ not in $\widetilde X(Q,b_\ast)$ so that $\mu_\ast^{t+1}(b_\ast)\in X_i$ for some $i$. Choose a regular function $f:\CC^n\to \CC$, i.e. a polynomial in $n$ variables, which is zero on $X_{i-1}$ and nonzero on the point $\mu_\ast^t(b_\ast)$. Composing with the rational morphism $\mu_\ast^{-1}$ gives a rational function on $X_i$ whose denominator is a monomial and whose numerator is a polynomial function $g$. Moreover, $g$ is zero on a dense subset of $X_i$ by Theorem \ref{thm: mu ast cyclically permutes components of X}(2) and thus zero on all of $X_i$, but $g$ is nonzero on $\mu_\ast^{t+1}(b_\ast)\in X_i$, since $g(\mu_\ast^{t+1}(b_\ast))=f(\mu_\ast^{t}(b_\ast))\neq0$. This contradiction shows that $\mu_\ast^t(b_\ast)\in X_{i-1}$ as claimed.
\end{proof}

\begin{lem}\label{lem: Uid}
For every component $X_i$ of $X(Q,b_\ast)$ and every integer $d>0$, there is a $p_d>d$ and a dense open subset $U_i^d$ of $X_i$ so that, for every $a_\ast\in U_i^d$, we have the following.
\begin{enumerate}
\item For every $0\le t\le p_d$, the coordinates of $\mu_\ast^t(a_\ast)$ are nonzero.
\item Any polynomial of degree $\le d$ which is zero on $\mu_\ast^t(a_\ast)$ for all $0\le t\le p_d$ will also be zero on $X(Q,b_\ast)$.
\end{enumerate}
\end{lem}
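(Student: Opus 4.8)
The plan is to build $U_i^d$ as a finite intersection of dense open subsets, handling the two conditions one after the other. First I would address condition (1). For each fixed $t\ge0$, the map $a_\ast\mapsto\mu_\ast^t(a_\ast)$ is given by Laurent polynomials in $a_1,\dots,a_n$, so its coordinates are regular functions on the complement of the coordinate hyperplanes. The locus inside $X_i$ where all coordinates of $\mu_\ast^t(a_\ast)$ are nonzero is therefore open in $X_i$; it is also dense, because it contains all but finitely many points $\mu_\ast^{s}(b_\ast)$ in $X_i$ (those points have all coordinates nonzero, and by Theorem \ref{thm: mu ast cyclically permutes components of X}(2) they are dense in $X_i$), so in particular it is nonempty, and a nonempty open subset of an irreducible variety is dense. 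Intersecting these over $0\le t\le N$ for any finite bound $N$ gives a dense open subset. So condition (1) poses no difficulty once we know the bound $p_d$; I postpone the choice of $p_d$ until after analyzing (2).

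Now for condition (2), the key point is a dimension/counting argument. Fix $d>0$. Let $V_d$ be the (finite-dimensional) vector space of polynomials in $n$ variables of degree $\le d$, and let $I_d\subset V_d$ be the subspace of those vanishing identically on $X(Q,b_\ast)$ (equivalently on $X_i$, since a polynomial of bounded degree vanishing on the dense set $\{\mu_\ast^{i+km}(b_\ast)\}$ vanishes on $X_i$; and $\mu_\ast$ moves these cyclically through the components, so vanishing on $X_i$ for one $i$ is linked to vanishing on $X(Q,b_\ast)$ via the Laurent-polynomial change of variables as in the proof of Theorem \ref{cor: dimension of components}). Since $\widetilde X(Q,b_\ast)$ is the Zariski closure of $\{\mu_\ast^t(b_\ast):t\ge0\}$, the intersection over all $t\ge0$ of the conditions ``$f$ vanishes at $\mu_\ast^t(b_\ast)$'' cuts $V_d$ down to exactly $I_d$. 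Because $V_d$ is finite-dimensional, this descending intersection stabilizes after finitely many steps: there is an integer $p_d$, which we may take $>d$, such that already $\{f\in V_d : f(\mu_\ast^t(b_\ast))=0\text{ for }0\le t\le p_d\}=I_d$.

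Next I would promote this statement about the single orbit of $b_\ast$ to a statement about a generic point of $X_i$. Consider the ``evaluation'' morphism $E_{p_d}:X_i\to V_d^\vee\otimes\CC^{p_d+1}$ (or concretely, the matrix whose rows record, for a basis $f_1,\dots,f_N$ of $V_d$, the values $f_j(\mu_\ast^t(a_\ast))$ for $0\le t\le p_d$) — this is a rational map, regular on the open set from step (1). Its kernel, as a function of $a_\ast$, is the space of degree-$\le d$ polynomials vanishing on $\mu_\ast^0(a_\ast),\dots,\mu_\ast^{p_d}(a_\ast)$. The rank of $E_{p_d}(a_\ast)$ is lower semicontinuous, so the locus where the rank is maximal (hence the kernel is minimal) is open; by the previous paragraph this maximal rank is achieved at the generic point of the orbit of $b_\ast$, so this locus is nonempty, hence dense open in the irreducible $X_i$. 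Define $U_i^d$ to be the intersection of this rank-maximal locus with the dense open set from condition (1). For $a_\ast\in U_i^d$, any $f\in V_d$ vanishing at $\mu_\ast^t(a_\ast)$ for $0\le t\le p_d$ lies in the kernel of $E_{p_d}(a_\ast)$, which has the same (minimal) dimension as $\ker E_{p_d}$ at the generic point, namely $\dim I_d$; and since the generic-point relation forces this kernel to be exactly $I_d$ (the kernel can only shrink, and $I_d$ is always contained in it because elements of $I_d$ vanish on all of $X_i$), we get $f\in I_d$, i.e. $f$ vanishes on $X(Q,b_\ast)$. This is exactly condition (2).

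The main obstacle I anticipate is the bookkeeping that makes ``vanishing on $X_i$'' interchangeable with ``vanishing on $X(Q,b_\ast)$'': a priori a polynomial could vanish on some components and not others. This is handled by the birational equivalence of the components established in the proof of Theorem \ref{cor: dimension of components} — composing with $\mu_\ast^{\pm1}$ (whose coordinates are Laurent polynomials with monomial denominators) converts a polynomial vanishing on $X_i$ into one vanishing on $X_{i\pm1}$ after clearing the monomial denominator, though this may raise the degree, which is why $p_d$ and the bound governing condition (2) must be chosen \emph{after} accounting for all the components simultaneously rather than component-by-component. Concretely, I would run the finite-dimensional stabilization argument of the second paragraph inside the space of polynomials of degree $\le d'$ where $d'$ is a bound large enough to absorb the degree shift from the $m$ changes of variables $\mu_\ast^{\pm1}$ needed to cycle through all components, and then take the resulting $p_d$ as the common bound; everything else is the routine semicontinuity and density reasoning sketched above.
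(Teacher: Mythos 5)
Your argument is essentially the paper's proof in dual form: where the paper takes the span $B_d$ of the Veronese images $P_d(\mu_\ast^t(b_\ast))$ in $\CC^{\binom{n+d}{n}}$, picks a finite spanning subset, and cuts $U_i^d$ out by the nonvanishing of the numerator of a maximal minor, you work with the annihilator $I_d\subset V_d$, a stabilizing descending chain of evaluation conditions, and lower semicontinuity of the rank of the evaluation matrix --- the same linear algebra and the same open conditions. One assertion should be excised: the parenthetical claim that vanishing on $X(Q,b_\ast)$ is \emph{equivalent} to vanishing on $X_i$ is false (a polynomial cutting out $X_i$ among the components vanishes on $X_i$ but not on $X(Q,b_\ast)$), and your closing paragraph's degree-shift patch is an unnecessary complication caused by that confusion; the argument only needs $I_d$ to be the degree-$\le d$ polynomials vanishing on all of $X(Q,b_\ast)$, together with Lemma \ref{lem1: mu sends X to X} to guarantee that the whole orbit of $a_\ast\in X_i$ stays in $X(Q,b_\ast)$ (so that $I_d\subset\ker E_{p_d}(a_\ast)$), exactly as in the paper. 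A small point to tighten: your stabilization is run along the window $0\le t\le p_d$ based at $b_\ast$, but the rank-maximal witness in $X_i$ is $\mu_\ast^i(b_\ast)$, whose window is the shifted interval $[i,i+p_d]$; you should either enlarge $p_d$ so the stabilized window works for every component simultaneously (finitely many $i$, and each tail orbit is still dense once $\widetilde X=X$), or index the spanning evaluations from $t\ge m$ as the paper does.
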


Before we prove this lemma, we will show that it implies the following generalization of Theorem \ref{thm: generic points of X(Q) generate X(Q)}. We use the notation $U_i^\infty$ for the intersection of $U_i^d$ over all $d>0$.


\begin{thm}\label{thm: generic points of X(Q,b) generate X(Q,b)}
Let $b_\ast\in\CC^n$ be any generic specialization of the cluster $\mathbf x$ of $Q$. Then, for generic $a_\ast\in X(Q,b_\ast)$, i.e.   for $a_\ast\in \bigcup U_i^\infty$, we have $X(Q,a_\ast)= X(Q,b_\ast)$.\end{thm}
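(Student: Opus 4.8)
The plan is to combine Lemma~\ref{lem: Uid} with Theorem~\ref{cor: dimension of components} in two directions, establishing $X(Q,a_\ast)\subseteq X(Q,b_\ast)$ and $X(Q,b_\ast)\subseteq X(Q,a_\ast)$ separately. For the first inclusion, suppose $a_\ast\in U_i^\infty$ for some component $X_i$. Then for every $d>0$ we have $a_\ast\in U_i^d$, so by part (1) of Lemma~\ref{lem: Uid} all coordinates of $\mu_\ast^t(a_\ast)$ are nonzero for $0\le t\le p_d$; letting $d\to\infty$ (and noting $p_d>d$) this shows all coordinates of $\mu_\ast^t(a_\ast)$ are nonzero for all $t\ge 0$, so $a_\ast$ is a generic specialization of $\mathbf x$ and $X(Q,a_\ast)$ is well-defined. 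Since $a_\ast\in X_i\subseteq X(Q,b_\ast)$ and $a_\ast\in(\CC^\times)^n$, Lemma~\ref{lem1: mu sends X to X} gives $\mu_\ast(a_\ast)\in X(Q,b_\ast)$; inductively $\mu_\ast^t(a_\ast)\in X(Q,b_\ast)$ for all $t\ge 0$. Hence the whole $\mu_\ast$-orbit of $a_\ast$ lies in the closed set $X(Q,b_\ast)$, and therefore so does its Zariski closure: $X(Q,a_\ast)\subseteq X(Q,b_\ast)$.

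For the reverse inclusion, I would fix any polynomial $f$ of degree $\le d$ that vanishes on $X(Q,a_\ast)$; in particular $f$ vanishes on every point $\mu_\ast^t(a_\ast)$, $t\ge 0$, hence on all $\mu_\ast^t(a_\ast)$ with $0\le t\le p_d$. Since $a_\ast\in U_i^\infty\subseteq U_i^d$, part (2) of Lemma~\ref{lem: Uid} forces $f$ to vanish on all of $X(Q,b_\ast)$. Thus every polynomial of every degree vanishing on $X(Q,a_\ast)$ also vanishes on $X(Q,b_\ast)$, i.e. the ideal $I(X(Q,a_\ast))$ is contained in $I(X(Q,b_\ast))$, which gives $X(Q,b_\ast)\subseteq X(Q,a_\ast)$ by taking zero loci. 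Combining the two inclusions yields $X(Q,a_\ast)=X(Q,b_\ast)$.

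One subtlety worth addressing explicitly: Lemma~\ref{lem: Uid} as stated concerns components $X_i$ of $X(Q,b_\ast)$ itself, whereas Theorem~\ref{thm: mu ast cyclically permutes components of X} used the components of $\widetilde X(Q,b_\ast)=X(Q,\mu_\ast^{t_0}(b_\ast))$. Here Theorem~\ref{cor: dimension of components} is what makes the argument clean: if $X(Q,b_\ast)$ is infinite, all components have the same dimension $\ge1$, so there are no zero-dimensional components to discard, $\widetilde X(Q,b_\ast)=X(Q,b_\ast)$ and $t_0=0$, so the two notions of ``component'' coincide. If instead $X(Q,b_\ast)$ is finite, then $U_i^\infty$ consists of the single point $\mu_\ast^{i}(b_\ast)$ (up to indexing) and $X(Q,a_\ast)=X(Q,b_\ast)$ holds trivially because $\mu_\ast$ permutes that finite set. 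I would dispatch the finite case in one line and run the argument above in the infinite case.

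The main obstacle is not in the logic of this last deduction — which is a routine ``containment of vanishing ideals in both directions'' argument — but in ensuring the hypotheses of Lemma~\ref{lem: Uid} are genuinely available, i.e. in the proof of that lemma, which must produce the dense open sets $U_i^d$ with the stated nonvanishing and ideal-generation properties uniformly in $d$. Granting the lemma, the only care needed here is the bookkeeping with the indices $i$ and the degree bound $d$, and the observation that a countable intersection of dense open subsets of an irreducible variety over $\CC$ is nonempty (indeed generic in the sense the theorem intends), so that $\bigcup_i U_i^\infty$ is a legitimate notion of ``generic point of $X(Q,b_\ast)$.''
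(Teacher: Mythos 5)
Your proof is correct and follows essentially the same route as the paper's: part (1) of Lemma~\ref{lem: Uid} plus Lemma~\ref{lem1: mu sends X to X} give $X(Q,a_\ast)\subseteq X(Q,b_\ast)$, and part (2) gives the reverse inclusion; your direct ideal-containment phrasing is just the contrapositive of the paper's argument by contradiction. The extra remarks on reconciling the components of $X(Q,b_\ast)$ with those of $\widetilde X(Q,b_\ast)$ via Theorem~\ref{cor: dimension of components}, and on the finite case, are consistent with how the paper handles these points inside the proof of Lemma~\ref{lem: Uid}.
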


\begin{proof} By (1) in Lemma \ref{lem: Uid}, every element of $U_i^\infty$ is a generic specialization of $\mathbf x$. By Lemma \ref{lem1: mu sends X to X}, the entire $\mu_\ast$-orbit of $a_\ast$ lies in $\widetilde X(Q,b_\ast)=X(Q,b_\ast)$. So, $X(Q,a_\ast)\subset X(Q,b_\ast)$. 

If $X(Q,a_\ast)\neq  X(Q,b_\ast)$ there must be a polynomial $f$ which is zero on the $\mu_\ast$-orbit of $a_\ast$ but nonzero on $ X(Q,b_\ast)$. Let $d=\deg f$. Given that $f$ is zero on the $\mu_\ast$-orbit of $a_\ast\in U_i^\infty\subset U_i^d$, we conclude by (2) in Lemma \ref{lem: Uid} that $f$ is zero on $ X(Q,b_\ast)$. This contradiction proves the theorem.
\end{proof}

\begin{rem}
 Theorem \ref{thm: generic points of X(Q) generate X(Q)} follows from Theorem~\ref{thm: generic points of X(Q,b) generate X(Q,b)} by choosing $b_\ast=(1,1,\ldots,1)$.
\end{rem}


It remains to prove the lemma:

\begin{proof}[Proof of Lemma \ref{lem: Uid}] We consider only the nontrivial case when $X(Q,b_\ast)$ is infinite.
By Theorems~\ref{thm: mu ast cyclically permutes components of X} and \ref{cor: dimension of components}, $X(Q,b_\ast)=X_0\cup \cdots\cup X_{m-1}$ and $t_0=0$ in Theorem~\ref{thm: mu ast cyclically permutes components of X}.
By (1) in Theorem \ref{thm: mu ast cyclically permutes components of X} we have that $\mu_\ast^i(b_\ast)\in X_i$. Since $\widetilde X(Q,b_\ast)=X(Q,b_\ast)$, a polynomial $f$ will be zero on $X(Q,b_\ast)$ if and only if $f(\mu_\ast^t(b_\ast))=0$ for all $t\ge i$. The key point of the proof is to show that, for $f$ of degree $\le d$, we only need to check this condition for $t\le p_d$ for some fixed $p_d>0$. This is a linear condition on the coefficients of $f$. Since the rank of a linear system is a lower semi-continuous function, there will be an open subset $U_i^d$ of $X_i$ on which this system is defined (Condition (1)) and has maximum rank. This will be the desired set.

We now construct the linear system. With $n,d$ fixed, consider the polynomial mapping
\[
	P_d: \CC^n\to \CC^{\binom{n+d}n}
\]
which sends $x_\ast=(x_1,\cdots,x_n)\in\CC^n$ to the sequence of all monomials in $x_j$ of degree $\le d$. For example, when $n=2,d=3$, we have:
\[
	P_3(x,y)=(1,x,y,x^2,xy,y^2,x^3,x^2y,xy^2,y^3).
\]
Then any polynomial function $f$ on $\CC^n$ of degree $\le d$ is given as the composition of $P_d$ with a linear mapping $f_\ast:\CC^{\binom{n+d}n}\to \CC$.

Let $B_d\subset \CC^{\binom{n+d}n}$ be the vector space span of all $P_d(\mu_\ast^t(b_\ast))$ for all $t\ge m$. Then a polynomial $f$ of degree $\le d$ is zero on $X(Q,\mu_\ast^m(b_\ast))=X(Q,b_\ast)$ if and only if $f_\ast(B_d)=0$. So, $P_d(a_\ast)\in B_d$ for all $a_\ast\in X(Q,b_\ast)$. Let $k$ be the dimension of $B_d$. Then $B_d$ has a basis consisting of $P_d(\mu_\ast^{t_j}(b_\ast))$ for some $m\le t_1<t_2<\cdots<t_k$. These vectors form a $\binom{n+d}n\times k$ matrix of rank $k$. So, there is some $k\times k$ minor $M$ of this matrix which is nonzero. Since $t_j\ge m\ge i$,  the entries of the matrix, being monomials in the coordinates of $\mu_\ast^{t_j-i}(\mu_\ast^i(b_\ast))$ for some $j$, are given as Laurent polynomials in the coordinates of $\mu_\ast^i(b_\ast)\in X_i$. Therefore, for each $i$,  the minor $M$ is a Laurent polynomial in the coordinates of $\mu_\ast^i(b_\ast)$. Let $F_i$ be the numerator of this polynomial. Then $F_i(\mu_\ast^i(b_\ast))\neq 0$.

Let $p_d=t_k-i$ and let $V_i^d$ be the subset of $X_i$ of all points $a_\ast$ so that $\mu_\ast^t(a_\ast)$ is defined with nonzero coordinates for $0\le t\le p_d$. Since this is an open condition and $\mu_\ast^i(b_\ast)\in V_i^d$, $V_i^d$ is a dense open subset of $X_i$. By Lemma \ref{lem1: mu sends X to X}, $\mu_\ast^t(a_\ast)\in X(Q,b_\ast)$ for all $0\le t\le p_d$. Therefore, $P_d(\mu_\ast^t(a_\ast))\in B_d$ for all $0\le t\le p_d$. The condition that the vectors $P_d(\mu_\ast^t(a_\ast))$, for $0\le t\le p_d$ span $B_d$ is an open condition which holds for $a_\ast=\mu_\ast^i(b_\ast)\in X_i\cap V_i^d$. Therefore, it holds on some Zariski open neighborhood of $\mu_\ast^i(b_\ast)$ in $X_i\cap V_i^d$. In fact, this condition will hold on the open subset $U_i^d$ of $X_i\cap V_i^d$ on which $F_i\neq0$.

Since $U_i^d\subset V_i^d$,  then $a_\ast\in U_i^d$ will satisfy Condition (1). For any polynomial $f$ of degree $\le d$ which is zero on $\mu_\ast^t(a_\ast)$ for all $0\le t\le p_d$,  the corresponding linear map $f_\ast$ will vanish on the vector $P_d(\mu_\ast^t(a_\ast))$
for all $0\le t\le p_d$. Since these vectors span $B_d$, $f_\ast(B_d)=0$. This implies that $f$ is zero on the set $X(Q,b_\ast)$, proving Condition (2) and concluding the proof of Lemma \ref{lem: Uid}, Theorems \ref{thm: generic points of X(Q,b) generate X(Q,b)} and \ref{thm: generic points of X(Q) generate X(Q)}.
\end{proof}

We illustrate some of the concepts in the proof of Lemma \ref{lem: Uid} with two examples.

\begin{eg}
Consider the $A_2$ quiver
\[
	Q: \quad 1\longleftarrow 2.
\]
Then $X(Q)$ has only five points $\mu_\ast^i(b_\ast)=$ $(1,1), (2,3), (2,1), (1,2), (3,2)$ for $i=0,1,2,3,4$. Thus $m=5$ and each $X_i$ consists of one point. For $d=2$, $\binom{n+d}n=6$. So, the five vectors $P_2(\mu_\ast^i(b_\ast))$ do not span $\CC^6$. These five vectors are the rows of the following $5\times 6$ matrix. (The proof of Lemma \ref{lem: Uid} uses the transpose of this matrix.)
\[
\begin{array}{c|cccccc}
 & 1 & x & y & x^2 & xy & y^2\\
 \hline
 b_\ast & 1 & 1 & 1 & 1 & 1 & 1\\
 \mu_\ast(b_\ast) & 1 & 2 & 3 & 4 & 6 & 9\\
\mu_\ast^2(b_\ast) & 1 & 2 & 1 & 4 & 2 & 1\\
\mu_\ast^3(b_\ast) & 1 & 1 & 2 & 1 & 2 & 4\\
\mu_\ast^4(b_\ast) & 1 & 3 & 2 & 9 & 6 & 4
\end{array}
\]
The span of these five vectors is $B_2\subset \CC^6$. This is a hyperplane perpendicular to the vector $(3,-2,-2,1,-1,1)$. Dot product with this vector gives a linear map $f_\ast:\CC^6\to\CC$, composing with $P_2$ gives
\[
	f(x,y)=f_\ast(P_2(x,y))=x^2-xy+y^2-2x-2y+3.
\]
This is the only quadratic polynomial which vanishes on the  frieze variety $X(Q)$. The real points form an ellipse centered at $(2,2)$ with major axis going from $(1,1)$ to $(3,3)$.
\end{eg}

Here is another example which explains the minor $M$ and numerator $F$.

\begin{eg}\label{eg: Kronecker}
Consider the Kronecker quiver
\[
	Q:\quad 1\Longleftarrow 2
\]
Consider the  frieze variety $X(Q)$. The first three points are $b_\ast=(1,1)$, $\mu_\ast(b_\ast)=(2,5)$, $\mu_\ast^2(b_\ast)=(13, 34)$.

Take $d=1$. Then $\binom{n+d}n=3$. In order to span $\CC^3$ we need three vectors: $P_1(\mu_\ast^t(b_\ast))$ for $t=0,1,2$. These are the rows of the following matrix.
\[
\begin{array}{c|ccc}
 & 1 & x & y \\
 \hline
 b_\ast & 1 & 1 & 1 \\
 \mu_\ast(b_\ast) & 1 & 2 & 5 \\
\mu_\ast^2(b_\ast) & 1 & 13 & 34 \\
\end{array}
\]
Since this has full rank, the determinant of this matrix (which is $-15$) is the maximal minor. However we need the minor as a Laurent polynomial in the coordinates of $\mu_\ast^i(b_\ast)$. Take $i=1$ and write $\mu_\ast^1(b_\ast)=(y_1,y_2)$. Thus $y_1=x_1', y_2=x_2'$. In terms of the cluster $y_1,y_2$, the $3\times 3$ matrix under consideration is:
\[
\left[\begin{matrix}
1 & \frac{y_1^4+y_2^2+2y_1^2+1}{y_1y_2^2}&  \frac{y_1^2+1}{y_2}\\
1 & y_1 & y_2\\
1 &\frac{y_2^2+1}{y_1} & \frac{y_1^4+2y_2^2+y_1^2+1}{y_1^1y_2}
\end{matrix}
\right]
\]
The determinant of this matrix is the rational function $M$. The numerator of $M$ is the polynomial $F=y_1^2y_2^2M$. This is a polynomial of degree 8 in $y_1,y_2$. The reason we use these variables is because we are looking for points $a_\ast=(a_1,a_2)$ close to $\mu_\ast^1(b_\ast)=(2,5)$. What we have already calculated is: $M(2,5)=-15$.

Using invariant rational functions, the generalized  frieze variety $X(Q,b_\ast)$ can be given as follows. The rational function
\[
	h(\mathbf x)=\frac{x_1^2+x_2^2+1}{x_1x_2}
\]
is equal, as an element of $\CC(\mathbf x)$, on all iterated Coxeter mutations of the cluster $\mathbf x$. To see this, write it as:
\[
	h(x_1,x_2)=\frac{x_1+x_1'}{x_2}=h(x_1',x_2)
\]
which is invariant under Coxeter mutation since $\mu_1$ switches the terms $x_1,x_1'$ and similarly for $\mu_2$. So, it is invariant under $\mu_2\circ \mu_1$.
 At $b_\ast=(1,1)$ it takes the value $h(1,1)=3$. This makes
\begin{equation}\label{eq for Kronecker}
	x_1^2+x_2^2+1=3x_1x_2
\end{equation}
at all points in $X(Q)$. \ {Note that equation \eqref{eq for Kronecker} is a specialization of the Markov equation $x_1^2+x_2^2+x_3^2=3x_1x_2x_3.$}

Using this equation we can see that the value of $M$ at any point $(y_1,y_2)$ in $X(Q)$ is equal to $-15$. So, $F=-15y_1^2y_2^2$. The set $U_1^d$ for $d=1$ is given by $F(\mu_\ast^t(a_\ast))\neq 0$ for three values of $t$, namely $t=-1,0,1$ since we are thinking of $a_\ast$ as a specialization of $(y_1,y_2)=(x_1',x_2')$. This makes $U_1^d=V_1^d$ the complement in $X(Q)$ of the 12 points consisting of $\mu_\ast^{-t}$, for $t=-1,0,1$, applied to the points $(0,\pm \sqrt{-1}),(\pm\sqrt{-1},0)$.
\end{eg}
 
 \begin{rem}\label{rem: X(Q,a) could be finite for Q tame}
We observe that Theorem \ref{thm main} does not always hold for the generalized frieze variety. For example, when $Q$ is the Kronecker quiver considered above and $b_\ast=\left(
\frac{\sqrt2 i}{2}, \sqrt 2 i
\right)$, $X(Q,b_\ast)$ consists of only two points, $b_\ast$ and $-b_\ast$. However, we believe that, for almost all $b_\ast$, the analogue of Theorem \ref{thm main} hold.
\end{rem}


\section{Construction from invariant rational functions}

For any $k\ge 0$, the coordinates of $\mu_\ast^k(\mathbf x)$ are Laurent polynomials in $\mathbf x$. Furthermore,  each coordinate of $\mathbf x$ is given as a Laurent polynomial in $\mu_\ast^k(\mathbf x)$. So, the set of values of $\mu_\ast^k(\mathbf x)$ is not contained in any hypersurface in $\CC^n$. So, for any rational function $h(\mathbf x)\in \CC(\mathbf x)$ and any $t\ge0$, we have another rational function
$h(\mu_\ast^t(\mathbf x))\in \CC(\mathbf x)$ since the denominator of $h(\mu_\ast^t(\mathbf x))$ cannot be identically zero. Suppose, furthermore, that $h(\mathbf x)$ is a Laurent polynomial in $\mathbf x$ and $a_\ast\in\CC^n$ is a generic specialization of $\mathbf x$. Then $h(\mu_\ast^t(a_\ast))$ is a well-defined complex number for any $t\ge0$. This is particularly useful when $h(\mathbf x)$ is periodic in the following sense.

\begin{defn}\label{def: invariance under mu-k}
We say that a rational function $h(\mathbf x)$ is \emph{invariant under $\mu_\ast^k$} if:
\begin{equation}\label{mutation invariant rat func}
	h(\mu_\ast^k(\mathbf x))=h(\mathbf x)
\end{equation}
as an element of $\CC(\mathbf x)$. If $k>0$ is minimal and $h(\mathbf x)$ is Laurent, we say that $h(\mathbf x)$ is an \emph{invariant Laurent polynomial for $Q$ of period $k$}. For each $t\ge0$ we will use the notation:
\begin{equation}\label{eq: ft and gt}
	h(\mu_\ast^t(\mathbf x))=\frac{f_t(\mathbf x)}{g_t(\mathbf x)}
\end{equation}
Note that $f_t,g_t\in \CC[\mathbf x]$ depend only on the residue class of $t$ modulo the period $k$.
\end{defn}

\begin{prop}\label{prop: formulas from one h}
Let $h(\mathbf x)$ be an invariant Laurent polynomial of period $k$. Let $a_\ast$ be a generic specialization of $\mathbf x$. For each $t\ge0$, let $c_t= h(\mu_\ast^t(a_\ast))$ and let $f_t(\mathbf x), g_t(\mathbf x)$ be as in \eqref{eq: ft and gt}. 
 For $0\le t<k$, let
\[
	F_{j,t}(\mathbf x):=f_t(\mathbf x)-c_{t+j}g_t(\mathbf x)
\]
be the numerator of the rational function $h(\mu_\ast^t(\mathbf x))-h(\mu_\ast^{j+t}(a_\ast))$, and let 
$X_j$ be the intersection of the $k$ hypersurfaces given by $F_{j,t}(\mathbf x)=0$, for $0\le j<k$. 

Then the generalized quiver variety $X(Q,a_\ast)$ is contained in the union $X_0\cup X_1\cup\cdots\cup X_{k-1}$. 
\end{prop}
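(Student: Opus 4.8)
The plan is to show that the $\mu_\ast$-orbit of $a_\ast$ is contained in $X_0\cup\cdots\cup X_{k-1}$, and then take Zariski closures. So fix an arbitrary point $\mu_\ast^s(a_\ast)$ in the orbit, write $s=qk+t$ with $0\le t<k$, and I claim $\mu_\ast^s(a_\ast)\in X_j$ where $j\equiv s\pmod k$ (more precisely $j$ is determined by $s-t$ reduced mod $k$; let me instead just say: I claim $\mu_\ast^s(a_\ast)\in X_{j}$ for the appropriate $j\in\{0,\ldots,k-1\}$ with $j\equiv s-t$, and since $s-t=qk$ this is $j=0$ — so actually I need to be a little more careful, see below). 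The key input is the invariance identity \eqref{mutation invariant rat func}: since $h$ is invariant under $\mu_\ast^k$, we have $h(\mu_\ast^{t+mk}(\mathbf x))=h(\mu_\ast^t(\mathbf x))$ as elements of $\CC(\mathbf x)$ for every $m$, hence $f_{t+mk}=f_t$ and $g_{t+mk}=g_t$, and consequently $c_{t+mk}=h(\mu_\ast^{t+mk}(a_\ast))=(f_t/g_t)(\mu_\ast^t(a_\ast)\text{ evaluated correctly})$ — really $c_{t'}$ depends only on $t'\bmod k$, so $c_{t'}=c_{t'\bmod k}$.

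Here is the clean version of the orbit computation. Take any $s\ge 0$ and write $s=qk+t$, $0\le t<k$. I will show $\mu_\ast^s(a_\ast)\in X_0$ is wrong in general; what is true is $\mu_\ast^s(a_\ast)\in X_{j}$ where we must pick $j$ so that the defining equations $F_{j,t}(\mu_\ast^s(a_\ast))=0$ hold for all $t\in\{0,\ldots,k-1\}$. Evaluate: for each $t\in\{0,\ldots,k-1\}$,
\[
h(\mu_\ast^t(\mu_\ast^s(a_\ast)))=h(\mu_\ast^{t+s}(a_\ast))=c_{t+s}=c_{(t+s)\bmod k},
\]
using that $c$ depends only on the residue mod $k$. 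Now $t+s=t+qk+t_0$ where I wrote the fixed $s$ as $qk+t_0$; so $(t+s)\bmod k=(t+t_0)\bmod k$. Thus $h(\mu_\ast^t(\mu_\ast^s(a_\ast)))=c_{(t+t_0)\bmod k}$. We want this to equal $c_{t+j}$ (as appearing in $F_{j,t}$), i.e. we want $c_{t+j}=c_{(t+t_0)\bmod k}$ for all $t$, which holds precisely when $j\equiv t_0\pmod k$. So set $j=t_0=s\bmod k$. Then for every $t\in\{0,\ldots,k-1\}$ we get $h(\mu_\ast^t(\mu_\ast^s(a_\ast)))=c_{t+j}$, equivalently the rational function $h(\mu_\ast^t(\mathbf x))-c_{t+j}$ vanishes at $\mathbf x=\mu_\ast^s(a_\ast)$, equivalently its numerator $F_{j,t}$ vanishes there (the denominator $g_t(\mu_\ast^s(a_\ast))$ is nonzero because $a_\ast$ is a generic specialization, so all these Laurent-polynomial evaluations are legitimate and nonzero where needed). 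Hence $\mu_\ast^s(a_\ast)\in X_j\subseteq X_0\cup\cdots\cup X_{k-1}$.

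Since every point of the $\mu_\ast$-orbit of $a_\ast$ lies in the closed set $X_0\cup\cdots\cup X_{k-1}$, its Zariski closure $X(Q,a_\ast)$ does too, which is the assertion. The only genuinely delicate point — and the one I'd flag as the main obstacle to state correctly — is the bookkeeping with the index $j$ versus $t$ versus the residue of $s$: one has to verify that $c_{t'}$ genuinely depends only on $t'\bmod k$ (this is exactly the content of $h$ being invariant under $\mu_\ast^k$, applied with $\mathbf x$ specialized to $a_\ast$, using that $a_\ast$ is generic so the specialization of the identity \eqref{mutation invariant rat func} is valid), and that the "shift by $j$" built into $F_{j,t}$ matches the residue of $s$. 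Everything else — well-definedness of the evaluations, passing to closures — is routine given the Laurent phenomenon and the genericity hypothesis. No use of the cyclic-permutation theorems is needed here; this proposition is logically independent and much softer.
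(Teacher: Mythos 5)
Your proof is correct and is essentially the paper's own argument: show each orbit point $\mu_\ast^s(a_\ast)$ satisfies $F_{j,t}=0$ for $j\equiv s\pmod k$ by evaluating the invariance identity, then pass to Zariski closures. Your extra care about the index bookkeeping and about the nonvanishing of the denominators $g_t(\mu_\ast^s(a_\ast))$ (which is where genericity of $a_\ast$ is actually used) only makes explicit what the paper leaves implicit.
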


\begin{proof} For any $0\le j<k$ and $s\ge0$, let $b_\ast=\mu_\ast^{j+ks}(a_\ast)$. Then, for any $t\ge0$, we have $h(\mu_\ast^t(b_\ast))=h(\mu_\ast^{j+t}(a_\ast))=c_{j+t}$. 
 Since $F_{j,t}(\mathbf{x})$ is the numerator of $h(\mu_\ast^t(\mathbf x))-h(\mu_\ast^{j+t}(a_\ast))$, we get $F_{j,t}(b_\ast)=0$.
Therefore, $b_\ast=\mu_\ast^{j+ks}(a_\ast)$ lies in $X_j$ for all $s\ge0$ and the union of the $X_j$ contains the entire $\mu_\ast$-orbit of $a_\ast$. So, $X(Q,a_\ast)\subset \bigcup X_j$.
\end{proof}

\begin{rem} Thus, the single Laurent polynomial $h(\mathbf x)$ generates $k^2$ polynomials $F_{j,t}(\mathbf x)$ giving $k$ varieties $X_j$ whose union contains $X(Q,a_\ast)$ and, in many cases, is equal to $X(Q,a_\ast)$ as shown in several examples below.
In these examples, all of the rational functions $h(\mu_\ast^t(\mathbf x))=f_t(\mathbf x)/g_t(\mathbf x)$ are Laurent polynomials with positive integer coefficients. This is reflected in the fact that the monomials in the polynomials $F_{j,t}(\mathbf x)=f_t(\mathbf x)-c_{j+t}g_t(\mathbf x)$ have the same sign except for one: $-c_{j+t}g_t(\mathbf x)$. We note that this is not a general phenomenon since, e.g., when $Q$ has finite type, $\mu_\ast^k(\mathbf x)=\mathbf x$ for some $k$ and, therefore, every Laurent polynomial will be invariant with period dividing $k$.
\end{rem}

\section{Examples} 
 To illustrate  Proposition \ref{prop: formulas from one h}, we give two examples, both tame, where  a single invariant Laurent polynomial $h(\mathbf x)$ whose period $k$ is one less than the number of vertices of $Q$ gives the complete decomposition of $X(Q)$ as a union of $k$ curves.

\subsection{The affine quiver $\tilde A_2$}
Let $Q$ be the quiver:
\[
\xymatrixrowsep{10pt}\xymatrixcolsep{10pt}
\xymatrix{
& 2\ar[dl]\\
1& &3\ar[ll]\ar[lu]
	}
\]
This quiver has the property that $\mu_1Q\cong Q$ after renumbering the vertices. In terms of the cluster variables $(x,y,z)$, after one mutation, we get back the same quiver with new variables $(y,z,x')$ where $x'=\frac1x(yz+1)$. For any rational function $h(x,y,z)$ let \[
h'(x,y,z):=h(y,z,x'),\]
{where $x'$ is the cluster variable obtained from the cluster $(x,y,z)$ by mutation in $x$. {
We also use the notation $h'=h\circ \tilde\mu$ where
\begin{equation}\label{eq: tilde mu}
	\tilde\mu(x,y,z):=(y,z,x').
\end{equation}
}
Note that $h^{(3)}(x,y,z)=h'''(x,y,z)=h(\mu_\ast(x,y,z))$.}

For example, let $h(\mathbf x)$ be the Laurent polynomial
\[
	h(x,y,z)=\frac{x+z}y
\]
Then $h',h''$ are given by
\[
	h'(x,y,z)=h(y,z,x')=\frac{y+x'}z=\frac{xy+yz+1}{xz}
\]
\[
	h''(x,y,z)=h'(y,z,x')=h(z,x',y')=\frac{z+y'}{x'}
\]
Observe that
\[
	\frac{h}{h''}=\frac{(x+z)x'}{y(z+y')}=\frac{yz+1+zx'}{yz+1+zx'}=1
\]
and thus 
 $h''=h$ and, consequently,  
 $h^{(a)}= h$ if $a$ is even, and $h^{(a)}= 
h'$ if $a$ is odd. 
Therefore $h$ and $h'$ are invariant under $\mu_\ast^2$:
\[
	h(\mu_\ast^2(x,y,z))=h^{(6)}(x,y,z)=h(x,y,z),
\]
and similarly, {$h'\circ \mu_\ast^2=h^{(7)}=h'$}.

Thus, $h(\mathbf x)$ is an invariant Laurent polynomial for $Q$ of period $2$. So, Proposition \ref{prop: formulas from one h} applies with 
\[
	c_0=h(1,1,1)=2
\]
\[
	c_1=h'(1,1,1)=3.
\]
So, $X(Q)$ is contained in the union of two curves $X_0\cup X_1$ where $X_0$ is given by the polynomial equations $F_{00}=F_{01}=0$ where
\[
	F_{00}(x,y,z)=Num(h(x,y,z)-c_0)=x+z-2y
\]
\[
	F_{01}(x,y,z)=Num(h'(x,y,z)-c_1)=xy+yz+1-3xz
\]
and $X_1$ is given by $F_{11}=F_{12}=0$ where
\[
	F_{10}(x,y,z)=Num(h(x,y,z)-c_1)=x+z-3y
\]
\[
	F_{11}(x,y,z)=Num(h'(x,y,z)-c_0)=xy+yz+1-2xz.
\]

From these equations it is easy to verify the observation from \cite{LLMSS} that $X_0$ is a nonsingular degree 2 curve. Indeed the equation $F_{00}=0$ is equivalent to the linear equation $z=2y-x$ which reduced the second equation to $F_{01}(x,y,z)=F_{01}(x,y,2y-x)=0$ which is a nondegenerate quadratic in two variables. 
Thus $X_0$ is a nonsingular curve in $\CC^3$ containing the infinite set of points $\mu_\ast^{2k}(1,1,1)$ for $k\ge0$. So, it must be the closure of this set. Similarly, the curve $X_1$ must be the closure of the set of all $\mu_\ast^{2k+1}(1,1,1)$. We therefore see that the frieze variety $X(Q)$ has two components given by the above four polynomials. These polynomials come from an example worked out in \cite{LLMSS}, but here all four polynomials come from the same Laurent polynomial $h$.
 
 \begin{rem}\label{rem: X contains nongeneric points}
 We note that frieze varieties often have nongeneric points. For example, the first component $X_0$ of the frieze variety discussed above contains the point $(0,\sqrt{2}i/2,\sqrt{2}i)$. Mutation at the first vertex gives $x'=(yz+1)/x=0/0$ which is undefined. However, $\tilde\mu$ (defined in \eqref{eq: tilde mu} above) sends the 0-component of $X(Q)$ to the 1-component $X_1$. So, the value of $x'$ can be computed from the linear equation $F_{10}(y,z,x')=0$:
 \[
 	x'=3z-y=\frac52\sqrt2 i.
 \]
 \end{rem}

\subsection{The affine quiver $\tilde A_n$}

More generally, consider the quiver:
\[
\xymatrixrowsep{10pt}\xymatrixcolsep{10pt}
\xymatrix{
&& 1\ar[dl] & 2\ar[l] &\ \ar[l]& \cdots &\ & n-1\ar[l]\\
Q: & 0& &&& &&&n\ar[lllllll]\ar[lu]
	}
\]
for $n\ge3$. Let $h$ be the Laurent polynomial:
\[
	h(x_0,x_1,\cdots,x_n)=\frac{x_{n-2}+x_n}{x_{n-1}}.
\]
Mutation gives:
\[
h'(x_0,\cdots,x_n):=h(x_1,x_2,\cdots,x_n,x_0)=\frac{x_{n-1}+x_0'}{x_{n}}=\frac{x_{n-1}x_0+x_1x_n+1}{x_0x_n}.
\]
Mutating $k$ times for $3\le k\le n$ ($k=2$ is given in \eqref{eq: clean formula for h(k)} below) gives
\[
	h^{(k)}(x_0,x_1,\cdots,x_n)=h(x_k,x_{k+1},\cdots,x_n,x_0',\cdots,x_{k-1}')=\frac{x_{k-3}'+x_{k-1}'}{x_{k-2}'}
\]
For $k=n$, $h^{(n)}=h$ since the quotient is:

\[
	\frac{h}{h^{(n)}}=\frac{(x_{n-2}+x_n)x_{n-2}'}{x_{n-1}(x_{n-3}'+x_{n-1}')}=\frac{x_{n-1}x_{n-3}'+1+x_nx_{n-2}'}{x_{n-1}x_{n-3}'+1+x_nx_{n-2}'}=1
\]
This gives us a cleaner formula for $h^{(k)}$ for $2\le k\le n$ using $2-n\le k-n\le0$:
\begin{equation}\label{eq: clean formula for h(k)}
	h^{(k)}(\mathbf x)=h^{(k-n)}(\mathbf x)=h(x_{k+1}',\cdots,x_n',x_0,x_1,\cdots,x_k)=\frac{x_{k-2}+x_k}{x_{k-1}}.
\end{equation}
The equation $h=h^{(n)}$ also implies that  $h^{(nk)}=h$, for all $k\ge 0$. So, 
\[
	h(\mu_\ast^k(\mathbf x))=h^{k(n+1)}(\mathbf x)=h^{(k)}(\mathbf x).
\]
In particular $h(\mu_\ast^n(\mathbf x))=h^{(n)}(\mathbf x)=h(\mathbf x)$. So, $h(\mathbf x)$ is an invariant Laurent polynomial for $Q$ of period dividing $n$. To see that the period of $h$ is exactly $n$ we compute $c_k$:
\begin{equation}\label{eq: value of ck}
	c_k=h^{(k)}(1,1,\cdots,1)=h(1,1,\cdots,1,2,3,\cdots,k+1)=\begin{cases} 3 & \text{if } k=1;\\
    2& \text{if } 2\le k\le n.
    \end{cases}
\end{equation}

Also, $h^{(k)}$ is invariant under $\mu_\ast^n$, for all $k\ge0$, since $h^{(k)}(\mu_\ast^{sn}(\mathbf x))=h^{(k+sn)}(\mathbf x)=h^{(k)}(\mathbf x)$. So, for generic $a_\ast$, the frieze variety $X(Q,a_\ast)$ has $n$ components 
\[
	X(Q,a_\ast)= X_0\cup  X_1\cup\cdots \cup X_{n-1};
\]
 the first component $ X_0$, containing the $\mu_\ast^n$-orbit of $a_\ast=(1,1,\cdots,1)$, is given by the $n$ polynomials $F_{01},\cdots,F_{0,n-1},F_{0n}=F_{00}$  where 
	\[ 
	F_{01}=Num(h'(\mathbf x)-c_1)=x_0x_{n-1}+x_1x_n+1-3x_0x_n,
\]
 by Proposition \ref{prop: formulas from one h} and  \eqref{eq: value of ck}; and for $2\le k\le n$ 
\[
	F_{0k}=Num(h^{(k)}(\mathbf x)-c_k)=x_{k-2}+x_k-2x_{k-1},
\]
 by   Proposition \ref{prop: formulas from one h} ,\eqref{eq: clean formula for h(k)} and \eqref{eq: value of ck}.

\begin{rem}
Note that, for any point in $X_0$, the equations $F_{0k}=0$ for $2\le k\le n$ express the coordinates $x_2,x_3,\cdots,x_n$ as linear combinations of $x_0,x_1$. Thus, points in $X_0$ are determined by their first two coordinates.
Since $X_0$ contains the 3 points $(1,1,\cdots), (1,2,\cdots), (n+1,2n+3,\cdots)$ and their negatives (where we ignore all but the first two coordinates) and does not contain $(0,0,\cdots)$, linear algebra in $\mathbb C^2$ shows $X_0$ cannot be a union of two straight lines. So, $X_0$ is an irreducible degree 2 curve containing the $\mu_\ast^n$-orbit of the point $a_\ast=(1,1,\cdots,1)$. Since no two distinct curves can have an infinite intersection, $X_0$ is the Zariski closure of this set.
\end{rem}

By Proposition \ref{prop: formulas from one h}, $X_1$ is given by the $n$ polynomial equations $F_{1,t}=0$ for $0< t\le n$
\[
	F_{1n}=Num(h(\mathbf x)-c_1)=x_{n-2}+x_n-3x_{n-1}
\]
\[
	F_{11}=Num(h'(\mathbf x)-c_2)=x_0x_{n-1}+x_1x_n+1-2x_0x_n
\]
\[
	F_{1k}=F_{0k}, \text{ for }k=2,\cdots,n-1.
\]
The other polynomials $F_{jt}$ are similar. As in the case of $\tilde A_2$, all polynomials $F_{jk}$ are given by the single Laurent polynomial $h$ and its $n-1$ mutations $h^{(k)}$ for $1\le k<n$.

\section{Symmetry}

One easy observation {\cite{LLMSS}} is that, if a permutation $\sigma$ of $\{1,2\cdots,n\}$ leaves the quiver $Q$ invariant, then the {frieze} variety $X(Q)$ satisfies $x_k=x_{\sigma(k)}$ for all $k$. In terms of invariant rational functions, $h=x_{\sigma(k)}/x_k$ is invariant under $\mu_\ast^2$ since $\mu_\ast$ inverts $h$.

A similar result holds true if a mutation of $Q$ has symmetry. For example,
\[
	Q:\quad 1\Longleftarrow 2\Longleftarrow 3
\]
becomes symmetric after one mutation:
\[
	\mu_1(Q):\quad 1'\Longrightarrow 2\Longleftarrow 3
\]
This implies that
\[
	h=\frac{x_1'}{x_3}=\frac{x_2^2+1}{x_1x_3}
\]
is invariant under $\mu_\ast^2$. Since $c_0=h(1,1,1)=2$ and $c_1=1/c_0=1/2$, the {frieze} variety of $Q$ is $X(Q)=X_0\cup X_1$, where $X_0$ is the hypersurface given by $F_0=0$, where
\[
	F_0=Num(h(\mathbf x)-c_0)=x_2^2+1-2x_1x_3
\]
and $X_1$ containing $\mu_\ast(1,1,1)=(2,5,26)$ is the hypersurface given by $F_1=0$, where
\[
	F_1=Num\left(h(\mathbf x)-\frac12\right)=Num\left(2{h(\mathbf x)}-1\right)=2x_2^2+2-x_1x_3.
\]
\begin{rem}
The hypersurface $X_0$ contains the $\mu_\ast^2$-orbit of the point $(1,1,\cdots,1)$. Since this set is not contained in any curve by Theorem \ref{thm main}, there cannot be a smaller variety containing this set. So, $X_0$ is the Zariski closure of the $\mu_\ast^2$ orbit of $(1,1,\cdots,1)$ and similarly for $X_1$.
\end{rem}
More generally we have the following.

\begin{prop}\label{prop: symmetry of a mutation of Q}
Suppose that $i$ is a sink in the quiver $Q$ and $j$ is a source so that, for any other vertex $k$, the number $n_k$ of arrows from $k$ to $i$ is equal to the number of arrows from $j$ to $k$. Then, the  frieze variety $X(Q)$ is contained in the union of two hypersurfaces $X_0,X_1$ given by the equations $F_0=0$ and $F_1=0$, where
\[
	F_0=1-2x_ix_j+\prod_k x_k^{n_k}
\]
\[
	F_1=2-x_ix_j+2\prod_k x_k^{n_k}.
\]
\end{prop}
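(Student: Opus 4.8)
The plan is to exhibit an invariant Laurent polynomial $h(\mathbf x)$ for $Q$ of period $2$ and then invoke Proposition \ref{prop: formulas from one h} together with the dimension result Theorem \ref{thm main}. Since $i$ is a sink, mutation $\mu_i$ replaces $x_i$ by $x_i'=x_i^{-1}\left(1+\prod_k x_k^{n_k}\right)$, where the product runs over the vertices $k$ with an arrow $k\to i$ (so $n_k>0$ exactly for those $k$, and we may harmlessly let the product run over all $k$). The hypothesis that the number of arrows $k\to i$ equals the number of arrows $j\to k$ for every $k$ is precisely the statement that $\mu_i(Q)$ has the symmetry exchanging the roles of $i$ (now a source $i'$) and $j$ (a sink), with the incidence data at $i'$ matching that at $j$. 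The natural candidate is therefore
\[
	h(\mathbf x)=\frac{x_i'}{x_j}=\frac{1+\prod_k x_k^{n_k}}{x_ix_j}.
\]
Note this expression is manifestly symmetric in $i'$ and $j$ in the mutated seed, which is what will force period $2$.

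First I would verify that $h$ is invariant under $\mu_\ast^2$. The clean way: observe that $h$ is inverted by each $\mu_\ell$ that it is ``sensitive'' to, in the sense of the $\tilde A_n$ and $1\Leftarrow2\Leftarrow3$ examples above. Concretely, $\mu_i$ sends $x_i\mapsto x_i'$, and since $h=x_i'/x_j$ while (using the symmetry of $\mu_i(Q)$) $h$ may equally be written with $x_i$ and $x_j$ interchanged after mutation, one gets $h\circ\mu_i$ replaces $x_i'$ by $x_i$, i.e. $h\mapsto 1/h$ essentially; more carefully, I would track $h(\mu_\ast(\mathbf x))$ explicitly and show $h(\mu_\ast(\mathbf x))=1/h(\mathbf x)$ as elements of $\CC(\mathbf x)$ — using that $j$ is a source so $x_j'=x_j^{-1}(1+\prod_k x_k^{m_k})$ with exponents $m_k$ equal to the number of arrows $j\to k$, which by hypothesis equals $n_k$; the key cancellation is that the numerator polynomial $1+\prod x_k^{n_k}$ reappears. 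Once $h\circ\mu_\ast=1/h$ is established, $h\circ\mu_\ast^2=h$, so $h$ is an invariant Laurent polynomial of period dividing $2$ (period exactly $2$ unless $h$ is constant, which it is not).

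Next I would compute the constants $c_0=h(1,\dots,1)=(1+1)/(1\cdot1)=2$ and $c_1=h(\mu_\ast(1,\dots,1))=1/c_0=1/2$, using $h\circ\mu_\ast=1/h$ and $b_\ast=(1,\dots,1)$. Then Proposition \ref{prop: formulas from one h} applies with $k=2$: $X(Q)=X(Q,(1,\dots,1))$ is contained in $X_0\cup X_1$, where $X_0$ is cut out by $F_{0,0}=\mathrm{Num}(h-c_0)$ and $F_{0,1}=\mathrm{Num}(h\circ\mu_\ast - c_1)$. Clearing the denominator $x_ix_j$ in $h-c_0$ gives exactly $F_0=1-2x_ix_j+\prod_k x_k^{n_k}$; and since $h\circ\mu_\ast=1/h$, the equation $h\circ\mu_\ast=c_1=1/2$ is equivalent to $h=2$... so I must be careful: $X_0$ uses $F_{0,t}=\mathrm{Num}(h(\mu_\ast^t(\mathbf x))-c_{t})$ for $t=0,1$, i.e. the pair $(h-c_0,\ h\circ\mu_\ast-c_1)$, while $X_1$ uses $(h-c_1,\ h\circ\mu_\ast-c_0)$. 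Since $h\circ\mu_\ast-c_0 = 1/h-2$ has numerator proportional to $1-2h$, whose numerator after clearing $x_ix_j$ is $x_ix_j - 2(1+\prod x_k^{n_k}) = -\,F_1$, I obtain that $X_1$ is cut out by $F_0'=\mathrm{Num}(h-1/2)$, i.e. $2(1+\prod x_k^{n_k})-x_ix_j$, which up to sign is again $F_1$, together with $F_1$-type equations; thus the single defining equation in each case is $F_0=0$ and $F_1=0$ respectively, and $X(Q)\subset X_0\cup X_1$ as claimed. (That $X_0,X_1$ are genuinely the components and are irreducible of the expected dimension follows as in the remarks above from Theorem \ref{thm main}, though the Proposition only claims containment, which is all that is asserted in the statement.)

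The main obstacle I anticipate is the bookkeeping in Step 1: correctly identifying the exponents appearing after $\mu_i$ and after the full $\mu_\ast$, making sure the symmetry hypothesis (arrows $k\to i$ versus arrows $j\to k$) is used exactly where needed, and confirming that the other mutations $\mu_\ell$ ($\ell\neq i,j$) in the sequence $\mu_\ast$ do not disturb $h$ — this should hold because $h$, rewritten suitably, involves only $x_i, x_j$ and the ``boundary'' variables $x_k$ with $n_k>0$ in a way that is fixed by mutation at vertices not adjacent to both, but pinning this down cleanly (rather than by a brute-force Laurent computation) is the delicate point. An alternative safer route, if the conceptual argument gets tangled, is to directly verify $h\circ\mu_\ast = 1/h$ by the Laurent-phenomenon computation of $\mu_\ast(\mathbf x)$ restricted to the variables that occur in $h$, which is finite and mechanical.
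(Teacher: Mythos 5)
Your proposal follows essentially the same route as the paper: the same invariant Laurent polynomial $h=x_i'/x_j=\bigl(1+\prod_k x_k^{n_k}\bigr)/(x_ix_j)$, the same identity $h\circ\mu_\ast=1/h$, the same constants $c_0=2$, $c_1=1/2$, and the same appeal to Proposition~\ref{prop: formulas from one h}. One small correction to the bookkeeping you flagged as delicate: since the targets of arrows out of the source $j$ are mutated before $j$ in $\mu_\ast$, the exchange relation is $x_j'=x_j^{-1}\bigl(1+\prod_k (x_k')^{n_k}\bigr)$ with \emph{primed} variables, and it is this primed numerator that cancels against the numerator of $x_i''$ to give $h(\mu_\ast(\mathbf x))=x_i''/x_j'=x_j/x_i'=1/h(\mathbf x)$.
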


\begin{proof}
After the mutation $\mu_\ast=\mu_n\circ\cdots\circ\mu_1$, we will have $x_\ast'$ where
\[
	x_i'=\frac{1+\prod x_k^{n_k}}{x_i},\quad x_j'=\frac{1+\prod (x_k')^{n_k}}{x_j}
\] 
So, the rational function $h(\mathbf x)=x_i'/x_j$ will mutate to 
\[
	h(\mu_\ast(\mathbf x))=\frac{x_i''}{x_j'}=\left(
	\frac{1+\prod (x_k')^{n_k}}{x_i'}
	\right)\left(
	\frac{x_j}{1+\prod (x_k')^{n_k}}
	\right)=\frac{x_j}{x_i'}=\frac1{h(\mathbf x)}.
\]
So, $h(\mathbf x)$ is $\mu_\ast^2$ invariant.
Since $h(\mathbf x)=f(\mathbf x)/g(\mathbf x)$ where $f(\mathbf x)=1+\prod x_k^{n_k}$ and $g(\mathbf x)=x_ix_j$, $c_0=h(1,1,\cdots,1)=2$, $c_1=1/c_0=\frac12$, the numerator of $h(\mathbf x)-c_0$ is $F_0$ and the numerator of$h(\mathbf x)-c_1$ is $F_1$. By Proposition \ref{prop: formulas from one h}, the $\mu_\ast^2$ orbit of $(1,1,\cdots,1)$ satisfies $F_0=0$ and the $\mu_\ast^2$ orbit of $\mu_\ast(1,1,\cdots,1)$ satisfies $F_1=0$.
\end{proof}

\section{Questions and answers}\label{sec: Q+A}

We list a few questions from the first version of this paper and short answers to these questions following suggestions by Gordana Todorov. Details will be given in another paper.

\begin{enumerate}
\item  Question: In the tame case, does an invariant Laurent polynomial $h(\mathbf x)$ always exist? 
Answer: Yes. The cluster character of a regular module in a tube or rank $k$ will be an invariant Laurent polynomial of period $k$.
\item Question: Can  $h(\mathbf {x})$ be chosen to have positive integer coefficients? Answer: In the tame case yes. In the wild case we also believe the answer is yes since we believe that the only invariant rational functions are the ones given by symmetry of the quiver as in Proposition \ref{prop: symmetry of a mutation of Q}.
\item  Question: Can  $h(\mathbf {x})$ be chosen such that all iterated Coxeter mutations $h(\mu_\ast^t(\mathbf x))$ have positive integer coefficients? Answer: In the tame case, yes. The answer also seems to be yes in the wild case if, as we suspect, the only invariant rational functions come from symmetry of the quiver as observed in \cite{LLMSS} or as given in Proposition \ref{prop: symmetry of a mutation of Q}. However we note that, in the latter case, $h(\mu_\ast(\mathbf x))=1/h(\mathbf {x})$ is not Laurent.
\item Question: Is the period of the invariant Laurent polynomial always equal to the number of components of $X(Q,a_\ast)$?
Answer:  No, a counterexample is given by the following quiver
\[
\xymatrixrowsep{5pt}\xymatrixcolsep{5pt}
\xymatrix{
&&&& 2\ar[lld] && 3\ar[ll] && \\
Q:&& 1 &&&&&&5\ar[llld]\ar[llu]\\
&&&&&4\ar[lllu] &&
	}
\]
Here there is a tube of rank 3 giving an invariant Laurent polynomial of period 3 and another tube of rank 2 giving an invariant Laurent polynomial of period 2. This suggest that there should be 6 components.
\end{enumerate}



\begin{thebibliography}{99}

\bibitem{FZ} S. Fomin and A. Zelevinsky, Cluster algebras
I: Foundations, \emph{J. Amer. Math. Soc.} \bf 15 \rm (2002),
497--529.


 \bibitem{LLMSS} K. Lee, L. Li, M. Mills, R. Schiffler and A. Seceleanu, Frieze varieties : A characterization of the finite-tame-wild trichotomy for acyclic quivers, arXiv:1803.08459.

 \bibitem{dlP}
J.~de la Pe\~na, 
Coxeter transformations and the representation theory of algebras, Finite-dimensional algebras and related topics (Ottawa, ON, 1992), 223--253, NATO Adv. Sci. Inst. Ser. C Math. Phys. Sci., 424, Kluwer Acad. Publ., Dordrecht, 1994. 

\bibitem{LS} K. Lee and R. Schiffler, Positivity for cluster algebras,   \emph{Ann. Math.} {\bf 182} (1), (2015) 73--125.


\end{thebibliography}
\end{document}